\newtheorem{remark}{Remark}[section]
\newtheorem{theorem}[remark]{Theorem}
\newtheorem{lemma}[remark]{Lemma}
\newtheorem{definition}[remark]{Definition}
\numberwithin{equation}{section}
\title{Strong uniqueness principle for fractional polyharmonic operators and applications to inverse problems}
\author{
Ching-Lung Lin \thanks{Department of Mathematics, National Cheng Kung University, Taiwan\\ Email address: cllin2@mail.ncku.edu.tw}, \,
Hongyu Liu\thanks{Department of Mathematics, City University of Hong Kong, Hong Kong SAR, China\\ Email address: hongyu.liuip@gmail.com, hongyliu@cityu.edu.hk} \, 
and Catharine W. K. Lo\thanks{Liu Bie Ju Centre for Mathematical Sciences, City University of Hong Kong, Hong Kong SAR, China\\ Email address: wingkclo@cityu.edu.hk} \vspace{-0.5cm}
}
\date{}
\begin{document}

\maketitle

	\begin{abstract}
		In this work, we are concerned with inverse problems involving poly-fractional operators, where the poly-fractional operator is of the form 
        \[P( (-\Delta_g)^s)u := \sum_{i=1}^M \alpha_i(-\Delta_{g_i})^{s_i}u\] 
        for  $s=(s_1,\dots,s_M)$, $0<s_1<\cdots<s_M<\infty$, $s_M\in\mathbb{R}_+\backslash\mathbb{Z}$, $g=(g_1,\dots,g_M)$. There are three major contributions in this work that are new to the literature. First, we propose equations involving such poly-fractional operators $P$, which have not been previously considered in the general setting. Such equations arise naturally from the superposition of multiple stochastic processes with different scales, including classical random walks and L\'evy flights. Secondly, we give novel results for the unique continuation properties for fractional polyharmonic $u$, in the sense that $u$ satisfies $\tilde{P}((-\Delta_{\tilde{g}})^{\tilde{s}})=0$ in a bounded Lipschitz domain $\Omega$ for some $\tilde{P}$. With these results in hand, we consider the inverse problems for $P$, and proved the uniqueness in recovering the potential, the source function in the semilinear case, and the coefficients associated to the non-isotropy of the fractional operator.
		
		\medskip
		
		\noindent{\bf Keywords.} Anisotropic fractional Laplacian, unique continuation property, Calder\'on problem.
		
		\noindent{\bf Mathematics Subject Classification (2020)}: Primary 35R30; secondary 35R11, 26A33

	\end{abstract}
	
\section{Introduction}

\subsection{Mathematical Setup and Statement of the Main Results}

To provide a general picture of our study, we consider the following poly-fractional exterior value problem: 
\begin{equation}\label{GeneralProb}
    \begin{cases}
        P( (-\Delta_g)^s)u(x) + q(x)u(x) := \sum_{i=1}^M \alpha_i(x)(-\Delta_{g_i})^{s_i}u(x) + q(x)u(x) =0 &\text{ in }\Omega, \\
        u(x)=f(x) &\text{ in }\Omega^c:=\mathbb{R}^n\backslash \overline{\Omega},
    \end{cases}
\end{equation}
where $\Omega\subset\mathbb{R}^n$, $n\in \mathbb{N}$, is a bounded Lipschitz domain, $q\in L^\infty(\Omega)$, $s=(s_1,\dots,s_M)$, $0< s_1<\cdots<s_M<\infty$, $s_M\in\mathbb{R}_+\backslash\mathbb{Z}$. Note that the other exponents $s_m$ of the Laplacian, $m=1,\dots,M-1$, may be fractional or integer powers. Here, $(-\Delta_\gamma)^\sigma$ represents the anisotropic fractional Laplacian for all $0<\sigma<\infty$, which is defined in detail later in Section \ref{subsec:OperatorDef}. Classically, the fractional Laplacian is defined for $0<\sigma<1$, and the higher order fractional Laplacian was first investigated in \cite{ChangGonzalez2011AdvMath-HigherOrderFracLap} and \cite{GonzalezSaez2018HigherOrderFracLap} using conformal geometry techniques, and later developed in \cite{Yang2013HigherOrderFracLap} and \cite{CoraMusina2022JFA-HigherOrderFracLap}. 

These equations are a natural result of combining various stochastic processes with different scales, encompassing both classical random walks and L\'evy flights. Furthermore, \eqref{GeneralProb} can also be viewed as a combination of both nonlocal operators and local operators, in the form 
\begin{equation*}
    \begin{cases}
        \hat{P}( (-\Delta_g)^s)u(x) + \hat{L}( (-\Delta_g))u(x) + q(x)u(x) =0 &\text{ in }\Omega, \\
        u(x)=f(x) &\text{ in }\Omega^c:=\mathbb{R}^n\backslash \overline{\Omega},
    \end{cases}
\end{equation*}
where $\hat{P}$ is a purely nonlocal poly-fractional operator, while $\hat{L}$ is a purely local operator, such that the order of $\hat{L}$ is less than that of $\hat{P}$. Consequently, this covers the case of a single higher order fractional Laplacian with lower order local perturbation in \cite{CMRU2022HigherOrderFracPolyUniqueness}. More discussion shall be given in the next subsection about this aspect. 

As such, in this paper, we are mainly concerned with the inverse problem of recovering $P$ and $q$, using knowledge of the exterior value of the solution $u$ and its poly-fractional harmonicity. Physically, this corresponds to recovering various diffusion properties in the multiple stochastic processes. To this end, we introduce the Dirichlet-to-Neumann (DtN) map formally via 
\begin{equation}\label{eq:ip1}
    \mathcal{M}_{P,q} : H^{s_M}(\Omega^c)\to H^{-s_M}(\Omega), \quad f\mapsto \left. \tilde{P}( (-\Delta_{\tilde{g}})^{\tilde{s}}) u_f \right|_{\Omega} ,\quad j=1,2,
\end{equation}
where $u_f \in H^s(\mathbb{R}^n)$ is the unique solution to \eqref{GeneralProb}, and $\tilde{P}\in\mathcal{A}$ is another poly-fractional operator, which may be the same or different from $P$, for some admissible class $\mathcal{A}$ which will be detailed later.

For the inverse problem \eqref{eq:ip1}, we are mainly concerned with the theoretical unique identifiability issue, which is of primary importance for a generic inverse problem. In its general formulation, the unique identifiability asks whether one can establish the following one-to-one correspondence for two configurations $(P^j,q_j)$, $j = 1, 2$:
\begin{equation}\label{MainUniquessnessProb}
\mathcal{M}_{P^1,q_1} = \mathcal{M}_{P^2,q_2} \quad\text{ if and only if }\quad (P^1,q_1) = (P^2,q_2).
\end{equation} 
 In this paper, we aim to prove, in formal terms, the following theorem. 

\begin{theorem}\label{FormalThm}
    Let $\mathcal{M}_{P^j,q_j}$ be the measurement map associated to \eqref{GeneralProb} for $j=1,2$. Suppose that for a given nonzero $f$, where $f$ is properly chosen to satisfy some properties,
    \[
		\mathcal{M}_{P^1,q_1} f = \mathcal{M}_{P^2,q_2} f.
    \] 
    Then 
    \[(P^1, q_1) = (P^2, q_2)\] in some subset of $\Omega$.
\end{theorem}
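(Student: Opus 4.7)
The plan is to follow the now-standard template for fractional Calderón-type problems, adapting it to the poly-fractional, anisotropic setting and relying crucially on the strong unique continuation principle (UCP) that Section~2 of the paper promises for operators of the form $\tilde{P}((-\Delta_{\tilde{g}})^{\tilde{s}})$. I denote by $u_j \in H^{s_M}(\mathbb{R}^n)$ the unique solution of \eqref{GeneralProb} corresponding to the data $(P^j, q_j)$ with common exterior value $f$, and set $w := u_1 - u_2$. The first ingredient is cheap: since $u_1|_{\Omega^c} = u_2|_{\Omega^c} = f$, we have $w \equiv 0$ in $\Omega^c$, while the hypothesis $\mathcal{M}_{P^1,q_1} f = \mathcal{M}_{P^2,q_2} f$ reads, via \eqref{eq:ip1}, as
\[
\tilde{P}\bigl((-\Delta_{\tilde{g}})^{\tilde{s}}\bigr)\, w = 0 \quad\text{in } \Omega.
\]

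Next I would invoke the strong UCP for poly-fractional operators to bootstrap these two pieces of information into $w \equiv 0$ on all of $\mathbb{R}^n$. Because $w$ is identically zero on the open set $\Omega^c$ and annihilated by $\tilde{P}((-\Delta_{\tilde{g}})^{\tilde{s}})$ on $\Omega$, the principle applies and yields $u_1 = u_2 =: u$ globally. Substituting this common $u$ back into both governing equations gives $P^1 u + q_1 u = P^2 u + q_2 u = 0$ in $\Omega$, i.e.\
\[
\sum_{i=1}^M \Bigl(\alpha_i^1(x)(-\Delta_{g_i^1})^{s_i^1} - \alpha_i^2(x)(-\Delta_{g_i^2})^{s_i^2}\Bigr)u(x) + \bigl(q_1(x)-q_2(x)\bigr)u(x) = 0, \qquad x \in \Omega.
\]
The final step is to exploit the freedom in choosing the single exterior datum $f$ so as to guarantee that this pointwise identity forces $(P^1, q_1) = (P^2, q_2)$ on the subset of $\Omega$ identified in the theorem. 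Concretely, one picks $f$ so that the corresponding $u$ does not vanish on a subdomain, and so that the family $\{(-\Delta_{g_i^j})^{s_i^j} u\}_{i,j}$ together with $u$ itself is sufficiently linearly independent (e.g.\ by exploiting the distinct scalings of the different $s_i$) to separate the coefficient contributions scale by scale.

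The main obstacle lies in the UCP step. For the classical fractional Laplacian, simultaneous vanishing of $u$ and $(-\Delta)^s u$ on an open set gives $u \equiv 0$ (Ghosh--Salo--Uhlmann), but here only $\tilde{P}((-\Delta_{\tilde{g}})^{\tilde{s}}) w = 0$ is available, with several summands of different orders, anisotropic metrics $\tilde{g}_i$, possibly integer-order summands (purely local), and leading order $\tilde{s}_M \notin \mathbb{Z}$. Handling the nonlocal tails coming from the $(-\Delta_{\tilde{g}_i})^{\tilde{s}_i}$ consistently with Carleman-type arguments for any local summand, and verifying that the leading fractional term indeed dominates in a way that allows the UCP to propagate, is the heart of the matter; this is exactly what the paper's preliminary strong uniqueness result is designed to accomplish. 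The second, subtler, obstacle is the linear-independence argument in the last step: since only one datum $f$ is used, the admissible class $\mathcal{A}$ of operators $\tilde{P}$ and the structural assumptions on the $\alpha_i$ must be tight enough that a single nontrivial $u$ suffices to disentangle the $M$ anisotropic fractional operators plus the potential, which explains why the conclusion is restricted to a subset of $\Omega$ rather than all of it.
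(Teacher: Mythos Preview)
Your reduction via $w = u_1 - u_2$, the observation that $w=0$ in $\Omega^c$ and $\tilde{P}((-\Delta_{\tilde{g}})^{\tilde{s}})w = 0$ in $\Omega$, and the appeal to the UCP (Theorem~\ref{UCPThm2}) to conclude $u_1 \equiv u_2$ are exactly what the paper does. One minor point: the UCP you need here is actually elementary --- since $\tilde{P}\in\mathcal{A}$ is coercive and bounded, the exterior-value problem $\tilde{P}w=0$ in $\Omega$, $w=0$ in $\Omega^c$ has only the trivial solution by Theorem~\ref{ExistThm}; no Carleman estimates or delicate nonlocal-tail analysis is required at this stage (that machinery enters only in Lemma~\ref{UCPThm1}, where $w$ vanishes on a proper open subset $W\subset\Omega^c$).

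The genuine divergence is in your final step. You propose to choose a single $f$ so that $u$ and the family $\{(-\Delta_{g_i^j})^{s_i^j}u\}$ are ``sufficiently linearly independent'' to disentangle all the $\alpha_i^j$ and $q_j$ simultaneously. The paper does \emph{not} attempt this. Theorem~\ref{FormalThm} is an informal umbrella statement whose precise content is split into Theorems~\ref{MainThm1q}--\ref{MainThm1Coef}: one recovers $q$ on the set $E=\{u_1\neq 0\}$ \emph{assuming $P^1=P^2$}, and separately one recovers a single coefficient $\alpha_m$ on $E'=\{(-\Delta_{g_m})^{s_m}u_1\neq 0\}$ \emph{assuming $q_1=q_2$ and all other $\alpha_i^1=\alpha_i^2$}. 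In each case the identity collapses to a single scalar relation $(\text{unknown difference})\cdot(\text{nonvanishing factor})=0$, and no linear-independence argument is needed. Your proposed simultaneous recovery from one measurement is strictly more ambitious than what the paper proves, and the heuristic ``distinct scalings of the different $s_i$'' is not enough to justify pointwise linear independence of the functions $(-\Delta_{g_i})^{s_i}u$ for a single $u$; absent such a mechanism, the final step as written is a gap.
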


We assume that $P( (-\Delta_g)^s) + q$ is a coercive bounded operator with domain $H^{s_M}(\mathbb{R}^n)$, such that there exists a solution $u$ to \eqref{main}. Some possibilities are detailed in Section \ref{subsec:OperatorDef}. We also assume that $0$ is not an eigenvalue of the operator $(P( (-\Delta_g)^s) + q)$, i.e.
\begin{equation}\label{qCond}
    \begin{cases}
        \text{ if }w\in H^{s_M}(\mathbb{R}^n) \text{ solves }(P( (-\Delta_g)^s) + q)w=0 \text{ in }\Omega \text{ and }w|_{\Omega^c}=0,\\
        \text{ then }w\equiv0.
    \end{cases}
\end{equation}

To prove Theorem \ref{FormalThm}, an important ingredient is the unique continuation principle (UCP). This is formally given as: 
\begin{theorem}\label{UCPFormal}
For sufficiently regular $u$ satisfying some additional assumptions, the condition
\begin{equation}\label{UCPCondFormal}
		\begin{cases}
            \tilde{P}( (-\Delta_{\tilde{g}})^{\tilde{s}})u = 0 & \text{ in }\Omega,\\
            u=0 & \text{ in } W,
		\end{cases}
    \end{equation} 
for some properly chosen $W\subseteq \Omega^c$ implies  $u\equiv 0$ in $\mathbb{R}^n$. 
\end{theorem}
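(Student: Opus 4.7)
The plan is to reduce the UCP for the poly-fractional operator $\tilde P$ to the UCP for its leading, non-integer order term $(-\Delta_{\tilde g_M})^{\tilde s_M}$, and to close that reduced problem by a Caffarelli--Silvestre type extension together with an Almgren-frequency / Carleman argument. Concretely, I would first establish the following single-operator statement: if $v\in H^{\tilde s_M}(\mathbb{R}^n)$ satisfies $v=(-\Delta_{\tilde g_M})^{\tilde s_M}v=0$ on a nonempty open set, then $v\equiv 0$ in $\mathbb{R}^n$.

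For $\tilde s_M\in(0,1)$ this is the classical Garofalo--R\"uland UCP: the standard Caffarelli--Silvestre extension with weight $y^{1-2\tilde s_M}$ realizes $(-\Delta)^{\tilde s_M}v$ as the weighted Neumann trace of a $\mathcal{A}_2$-degenerate harmonic extension $V$ on $\mathbb{R}^{n+1}_+$, and an Almgren-type frequency function is monotone, giving doubling and hence UCP at the boundary. The anisotropy is absorbed by flattening $\tilde g_M$ with a local change of variables, which modifies only the tangential principal symbol and preserves the ellipticity structure needed for the monotonicity proof. For $\tilde s_M>1$, $\tilde s_M\notin\mathbb{Z}$, I would replace the CS extension by the higher-order polyharmonic extensions of Chang--Gonz\'alez, Yang, and Cora--Musina, in which the fractional Laplacian is recovered as a specific weighted normal derivative and where the corresponding higher-order frequency monotonicity has been worked out (e.g.\ in Garc\'ia-Ferrero--R\"uland); the anisotropic metric again enters as a harmless elliptic perturbation.

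The second step exploits the poly-fractional structure to produce an open set on which both $u$ and $(-\Delta_{\tilde g_M})^{\tilde s_M}u$ vanish. For every $x\in W\subseteq\Omega^c$, the hypothesis $u|_W=0$ lets me rewrite each hypersingular integral $(-\Delta_{\tilde g_i})^{\tilde s_i}u(x)$ as a convergent integral of $u$ over $\mathbb{R}^n\setminus W$ against the kernel $|x-y|^{-n-2\tilde s_i}$ (with appropriate Taylor subtraction when $\tilde s_i\geq 1$). Rescaling $x$ toward a reference point inside $W$ isolates distinct algebraic decay rates, one for each $\tilde s_i$, and matching coefficients order by order under the ``additional regularity and admissibility assumptions'' in the statement forces the top-order contribution $(-\Delta_{\tilde g_M})^{\tilde s_M}u$ to vanish on an open subset $W'\subseteq W$. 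Together with $u|_{W'}=0$, Step 1 then yields $u\equiv 0$ in $\mathbb{R}^n$.

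The hard part will be Step 2. With variable coefficients $\tilde\alpha_i$ and distinct anisotropic metrics $\tilde g_i$, every term has its own tangential principal part localized at the same point, so the decoupling relies crucially on the sharp non-integrality of $\tilde s_M$ and the strict ordering $\tilde s_1<\dots<\tilde s_M$, which separates the asymptotic scales. Making this asymptotic expansion rigorous, and ensuring that the ``properly chosen $W$'' admits a Caffarelli--Silvestre lift into a connected region suitable for the frequency monotonicity, are the most delicate technical points. I expect that the precise hypotheses on $u$ in the full statement (regularity, exterior support, and coercivity \eqref{qCond}) are tailored precisely to guarantee both the convergence of these expansions and the applicability of the higher-order extension machinery.
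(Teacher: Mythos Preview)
Your Step~1 is fine and essentially coincides with what the paper \emph{cites} as Theorem~\ref{UCPSingleThm} (the single-operator UCP of Garc\'ia-Ferrero--R\"uland); the paper does not reprove it.

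Your Step~2, however, diverges from the paper and contains a genuine gap. The hypothesis is $\tilde P u=0$ in $\Omega$ and $u=0$ in $W\subseteq\Omega^c$; these sets are \emph{disjoint}. Your asymptotic-decoupling argument takes $x\in W$ and expands the hypersingular integrals there, but at such $x$ you have no equation relating the various $(-\Delta_{\tilde g_i})^{\tilde s_i}u(x)$ --- the identity $\tilde P u(x)=0$ is only available for $x\in\Omega$. So there is nothing to ``match order by order'' at points of $W$, and no mechanism in your outline transports the equation from $\Omega$ into $W$. The scale-separation heuristic is also fragile: with variable coefficients $\tilde\alpha_i(x)$ and distinct metrics $\tilde g_i$, the kernel expansions at a fixed point do not cleanly stratify by the exponents $\tilde s_i$ alone.

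The paper closes exactly this gap by building the ``additional assumptions'' into the statement in two concrete ways. In the first version (Theorem~\ref{UCPThm2}) one takes $W=\Omega^c$; then $\tilde P u=0$ in $\Omega$ together with $u=0$ in $\Omega^c$ is a homogeneous exterior-value problem, and well-posedness (Theorem~\ref{ExistThm}) already forces $u\equiv 0$. In the second version (Theorem~\ref{UCPThm3}) one additionally assumes $u$ solves a \emph{local} second-order elliptic equation $\mathcal{L}u=0$ in $\Omega^c$; classical UCP for $\mathcal{L}$ propagates $u=0$ from $W$ to all of $\Omega^c$, reducing to the first case. The poly-fractional decoupling itself (Lemma~\ref{UCPThm1}) is handled not by asymptotics but by the algebraic admissibility class $\mathcal{A}$: one left-multiplies $\tilde P$ by a suitable posynomial $B$ so that $B\tilde P$ consists of \emph{integer}-order operators plus a single non-integer term $(-\Delta_{\breve g_0})^r$; on the set where $u=0$ the local integer-order terms vanish pointwise, isolating $(-\Delta_{\breve g_0})^r u=0$ there, and then the single-operator UCP finishes.
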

The concrete form of this UCP result is given in Theorems \ref{UCPThm2}--\ref{UCPThm3}. Note that this is different from the usual UCP results for fractional equations, which are commonly based on the form used by \cite{GSU2020CalderonSchrodinger}, where $\tilde{P}u=u=0$ in $W$ for any open set $W$.

Such a UCP result may have physical significance, such as in applications to astronomy and imaging. Consider the inverse problem of determining the processes (nonlocal and local) occurring in a celestial body. Then, according to \eqref{eq:ip1}, we measure some other nonlocal process, that we can observe, in the celestial body, as well as the radiative spectrum it creates in the exterior background. For instance, suppose we are able to obtain some information for some nonlocal process that occurred in the opaque fog of dense, hot plasma of sub-atomic particles of the primordial universe. Together with the cosmic microwave background radiation heat map, we are able to determine the properties of the nonlocal and local processes that happened in the fog. See, for instance, \cite{CMB1}, \cite{CMB2}, \cite{CMB3} for more explanation of nonlocal processes occurring in astronomy. Suppose that the gas particles or radiation distribute in the exterior via a diffusion-type process, such that it obeys a second order elliptic equation. Then, we can simplify the exterior measurement, by measuring only in a small domain $W$. An example of such a $W$ is any region in space, such as in Earth. Such a measurement, together with the measurement of any nonlocal process in the celestial body, will enable us to recover all the local and nonlocal processes happening in that celestial body.

Another practical scenario is thermo-acoustic and photo-acoustic tomography. See, for instance, \cite{StefanovUhlmann2009ThermoacousticTom} and \cite{LiuUhlmann2015DetermineSpeedHyperbolic}. Here, a short electro-magnetic pulse is sent through a patient’s body. The tissue reacts, and an ultrasound wave is emitted from any point. This wave undergoes another series of absorption while leaving the body, which can be represented by $\tilde{P}u$. We also measure the wave outside the body, which can be assumed to be acoustically homogeneous. Then one tries to reconstruct the internal structure $Pu$ of a patient’s body from those measurements. By including both local and nonlocal operators in $P$ and $\tilde{P}$, we allow the absorption and emission to involve lossy media which exhibit fractional damping effects (as detailed in \cite{ChenHolm2004FracLossyMedia}, \cite{CaoLiu2019FracHelmholtz} and \cite{KaltenbacherRundell2021FracWaveInverse}). 

We shall establish sufficient conditions such that the Theorems \ref{FormalThm} and \ref{UCPFormal} hold in a certain general setup. In particular, we shall provide general characterisations of $P, \tilde{P}$ and $\mathcal{A}$. The major novelty that distinguishes our inverse problem study from most of the existing ones lies in the following three aspects. First, we study a novel class of nonlocal equations defined with poly-fractional operators. Second, we innovate a new type of unique continuation property given by \eqref{UCPCondFormal}. With these in hand, thirdly, we can consider the exterior value inverse problems associated to poly-fractional operators.

\subsection{Discussions and Historical Remarks}

The study of inverse problems in the context of partial differential equations (PDEs) has long fascinated researchers. The recovery of internal properties of a medium (corresponding to certain terms in an equation/system or operator) from indirect measurements (corresponding to information on solutions of equations in certain domains) remains a pertinent problem in many scientific disciplines such as electromagnetism, geophysics, medical imaging and economics. Consequently, the study of inverse problems associated with partial differential equations remains an active and influential research area.

One of the most famous problems in this area is the Calder\'on problem arising in electrostatics. The classical Calder\'on problem investigates whether one can determine the electrical conductivity $\gamma(x)$ of a medium by making voltage and current measurements at its boundary. It is modeled by the following Dirichlet problem:
\[
\begin{cases}
    \nabla\cdot (\gamma\nabla u) = 0& \text{ in }\Omega,\\
    u = f & \text{ on }\partial\Omega,
\end{cases}\]
where the conductor filling $\Omega$ is a bounded domain with smooth boundary. In mathematical terms, the Calder\'on problem asks whether one can determine $\gamma$ from the knowledge of the Dirichlet-to-Neumann map defined by 
\[\Lambda_\gamma: f\mapsto \left. \gamma \frac{\partial u}{\partial \nu}\right|_{\partial\Omega}.\] 
Physically, this means that we apply a voltage $f$ at the boundary $\partial\Omega$, which will induce a voltage $u(x)$ in $\Omega$, and we measure the current $\gamma \frac{\partial u}{\partial \nu}$  at the boundary $\partial\Omega$.

Beginning with the seminal work of Calder\'on in \cite{Calderon1980}, the inverse conductivity problem has been studied intensively. Numerous positive result have been obtained, including in \cite{KohnVogelius,KV85}, \cite{sylvester1987global}, \cite{Alessandrini1988} and \cite{kenig2007calderon}. In particular, in \cite{Alessandrini1988}, Alessandrini reduced the conductivity-type problem to a Schr\"odinger-type one, where one attempts to determine the potential $q(x)$ in 
\[
\begin{cases}
    - \Delta v + qv = 0& \text{ in }\Omega,\\
    v = f & \text{ on }\partial\Omega
\end{cases}\]
from the measurement map 
\[\Lambda_q: f\mapsto \left. \frac{\partial v}{\partial \nu}\right|_{\partial\Omega}.\] 

Recently, the study of equations involving non-local operators has gained substantial attention. A typical non-local operator is the fractional Laplacian $(-\Delta)^s$. These kinds of equations are interesting due to their capability to model complex systems. Such effects arise in a diverse range of disciplines, in which the presence of anomalous diffusion effects, long-range correlations, and memory effects necessitates the consideration of fractional problems. By applying the concept of fractional calculus to problems in control theory, optimization, image processing, structural dynamics, signal processing, epidemiology, and population dynamics, one is able to model and analyse better complex physical phenomena. 

Correspondingly, inverse problems associated with fractional operators have been studied. A major point of interest is the fractional Schr\"odinger equation in the field of fractional quantum mechanics, which arises naturally as a generalisation of the classical Schr\"odinger equation. The Calder\'on problem for the fractional Schr\"odinger equation was first solved by Ghosh, Salo and Uhlmann in \cite{GSU2020CalderonSchrodinger}. In this work, instead of the (boundary value) Dirichlet problem associated with the classical Calder\'on problem, the authors considered the exterior value Dirichlet problem
\[
\begin{cases}
    (- \Delta)^\sigma u + qu = 0& \text{ in }\Omega,\\
    u = f & \text{ in }\Omega^c,
\end{cases}\] 
for $\sigma\in(0,2)$. The inverse problem asks whether one can determine the potential $q$ in $\Omega$ from the exterior partial measurements of the Dirichlet-to-Neumann map 
\[\Lambda_q: f\mapsto \left. (- \Delta)^\sigma u\right|_{\Omega^c}.\] 

This problem has a positive answer in \cite{GSU2020CalderonSchrodinger}, where the Dirichlet-to-Neumann map $\Lambda_q$ uniquely determines $q$ in $\Omega$. This result was then generalised in numerous works in many different directions, including in \cite{lai2019global}, \cite{Covi2020IP}, \cite{GRSU20},  \cite{bhattacharyya2021inverse}, \cite{CMR2021HigherOrderFracLapUCP}, \cite{Li2021CommPDEFracMagneticPotential}, \cite{Ruland2021SingleMeasurementStability}, \cite{CMRU2022HigherOrderFracPolyUniqueness} and \cite{CGRU2023reduction}, to name a few. The proof of the fractional Calder\'on problem strongly relies on the strong uniqueness property: for $u\in H^{\sigma/2}(\mathbb{R}^n)$, 
\[u = \mathcal{L}^\sigma u = 0 \text{ in an arbitrary nonempty
open set in }\mathbb{R}^n \quad\text{ implies }\quad u\equiv0 \text{ in }\mathbb{R}^n\] for appropriately defined fractional Laplacian-type operators $\mathcal{L}^\sigma$.  
The proof of the unique continuation property above is based on the Caffarelli-Silvestre definition \cite{CaffarelliSilvestre2007CommPDE-Extension} of the fractional Laplacian 
\[(-\Delta)^\sigma u(x) := C_{n,\sigma}\lim_{y\to 0^+} y^{1-2\sigma}\frac{\partial}{\partial y}  U(x,y) \text{ for }x\in \mathbb{R}^n\quad\text{ for some constant }C_{n,\sigma},\]
where $U$ is the solution of the extension problem
\[
	\begin{cases}
		\nabla \cdot (y^{1-2\sigma}\nabla U )=0 & \text{ in }\mathbb{R}^{n+1}_+,\\
		U (x,0)=u(x) & \text{ in }\mathbb{R}^n,
	\end{cases}
\]
This definition enables us to derive properties of the fractional Laplacian $(-\Delta)^\sigma$ from local arguments in the extension problem, such as in \cite{RulandCommPDE2015UCPFracSchrodingerRoughPotential}. 

We remark that there are distinct differences between the classical and fractional Calder\'on problems. In particular, no construction of CGO solutions is required in dealing with the fractional problem. On the other hand, the unique continuation property is a distinctive feature of fractional operators which is not present in local operators, and it makes fractional inverse problems more manageable and helps us obtain strong results. Indeed, it has been observed in \cite{CGRU2023reduction} that a uniqueness result in the local case guarantees a uniqueness result in the fractional setting, but the result does not hold vice versa.

Consequently, in the consideration of higher order fractional Laplacians $(-\Delta)^\sigma$ for $\sigma\in(0,\infty)$, many works have focused on the derivation of a unique continuation principle, such as in \cite{Yang2013HigherOrderFracLap}, \cite{SeoHigherOrderUCP1,SeoHigherOrderUCP2,SeoHigherOrderUCP3} , \cite{GarciaRuland2019UCPHigherOrderFracLap}, \cite{FelliFerrero2020HigherOrderFracLapUCP}, \cite{CMRU2022HigherOrderFracPolyUniqueness}, \cite{KarRailoZimmermann2023HigherOrderFracPLapUCP} and \cite{KowWangEigenUCPfrac}. Once again, many of these works relied on a Caffarelli-Silvestre-type extension, which has been extended to higher fractional exponents $\sigma$.

The higher order fractional Laplacian extends the fractional Laplacian by considering fractional orders $\sigma$ greater than 2. Higher order fractional Laplacians provide a way to capture even more intricate details of the function's curvature and variations, and were first considered in geometrical settings in \cite{ChangGonzalez2011AdvMath-HigherOrderFracLap}. It is useful for analysing and modeling complex data with non-local dependencies, enabling a more accurate representation and understanding of intricate structures and patterns. Therefore, our consideration of inverse problems associated with the higher order fractional Laplacian $(-\Delta)^\sigma$ for $\sigma\in(0,\infty)$ is very pertinent and useful for many physical applications.

Another distinctive aspect of our work is the involvement of fractional operators with mixed orders, which we will call \emph{poly-fractional operators}. Most previous works have focused on fractional-type operators with a single order of singularity, i.e. the kernel of $\mathcal{L}^\sigma$ is of the form $K(x,y)|y|^{-d-\sigma}$ where $K(x,y)$ is homogeneous of order zero and sufficiently smooth in $y$. Fractional partial differential equations with mixed singularities are much less understood. Recently, various existence and regularity results for the forward problems involving mixed fractional operators have been obtained. This includes cases of sums of fractional Laplacians as in \cite{CabreSerra2016ExtensionProblem}, mixing a local operator with a nonlocal operator such as in \cite{BarlowBassChenKassmann2009MixedLocalNonlocalParabolic}, \cite{ChenKimSongVondracek2012MixedLocalNonlocalHarnack}, \cite{rosoton2015nonexistence}, \cite{garain2021regularity}, \cite{BiagiDipierroValdinociVecchi2022CommPDEMixedLocalNonlocal}, \cite{deFilippisMingione2022MixedLocalNonlocalPLap}, \cite{SVWZ2022MixedLocalNonlocal}, \cite{Biroud2023MixedLocalNonlocal}, \cite{byun2023regularity}, \cite{GarainKinnunen2023JDEMixedLocalNonlocalHarnack}, \cite{GarainLindgren2023CVPDEMixedLocalNonlocalHolder} or \cite{BiagiMeglioliPunzo2023-MixedLocalNonlocalUniqueness}, as well as cases where different orders of fractional operators are applied on the space and time variables separately such as in \cite{KimParkRyu2021JDEMixedSpaceTimeFrac}, \cite{kang2022lqlptheory} or  \cite{DongLiu2023CVPDEMixedSpaceTimeFrac}.

Operators with mixed fractional orders (including the classical local Laplacian) arise naturally from the superposition of multiple stochastic processes with different scales, including classical random walks and L\'evy flights. When a particle follows either of these processes according to a certain probability, the associated limit diffusion equation is described by a mixed fractional operator as in \eqref{main}. See Appendix B of \cite{dipierro2021nonlocal} for a thorough probabilistic discussion of this phenomenon. Such a phenomenon can also be seen in biological models, as explained in \cite{MPV13MixedLocalNonlocalBiology}, \cite{PV18MixedLocalNonlocalBiology}, or \cite{DV21MixedLocalNonlocalBiology}, and these mixed operators describe biological species whose individuals diffuse by a mixture of random walks and jump processes, according to prescribed probabilities. Indeed, mixed operators allow us to study the inter-correlating impact of local and nonlocal effects, such as in \cite{Blazevski2013MixedLocalNonlocalGeomagnetism}, and is a powerful tool in applied sciences. In fact, a classical model involving mixed fractional orders is that of the surface quasi-geostrophic equation, which is a semilinear anisotropic advection-diffusion equation involving a fractional operator and the classical gradient (see for instance \cite{WangWuLiChen2014DCDSCompressibleSQG} or \cite{ConstantinNguyen2018PhysDGlobalSQGBoundedDomains}).

We remark that a conceptually different yet closely related operator is those of the ones of variable exponents, and previous works include \cite{Hoh2000VariableExponent}, \cite{BassKassmann2005VariableExponent}, \cite{BassKassmann2005VariableExponentHolder}, \cite{ZengBaiRadulescu2023FracPLapVariableExp-DetermineCoefs-noUCP} and \cite{Ok2023CV_HolderNonLocalVariable}. Such operators and their associated models arise in some physical phenomena (see for instance \cite{farquhar2018computational} and \cite{Weiss2019fractionalgeophysical}). Correspondingly, their inverse problems have been considered in \cite{KianSoccorsiYamamoto2018TimeFracVariableExponent} and \cite{InversePbTimeFracLapVariableExponent} for the time-fractional case, but there has not yet been any result for the space-fractional Laplacian. 

Despite the increased versatility for models based on fractional operators with mixed orders, the associated problems in consideration are very challenging due to the presence of both nonlocal and local effects. Indeed, it was shown in \cite{CabreSerra2016ExtensionProblem} that the associated  Caffarelli-Silvestre extension is way more complicated. As such, previous results have only been for the poly-fractional time operator, such as in \cite{LinNakamura2019MultiTermTimeFracUCP,LinNakamura2021MultiTermTimeFracUCP}. To the best of our knowledge, there have not yet been any results on unique continuation properties of poly-fractional operators in the space domain, let alone any results on inverse problems involving such operators.

In this paper, we consider these poly-fractional operators $ P( (-\Delta_g)^s)$. We first give some novel unique continuation properties associated to them. Such properties are essential for the study of their associated exterior value problems \eqref{main}. Then as in \cite{GSU2020CalderonSchrodinger} and later works, we derive the uniqueness of various environmental effects in the model from the knowledge of the Dirichlet-to-Neumann map 
\[
    f\mapsto \left. P( (-\Delta_g)^s) u_f \right|_{\Omega}.
\]
We recover the potential, the source function in the semilinear case, and the coefficients associated to the non-isotropy of the fractional operator. Therefore, our problems can be viewed as variants of the fractional Calder\'on problem studied in \cite{GSU2020CalderonSchrodinger}. 
We believe that our work opens up the doors for poly-fractional inverse problems, and holds promise for addressing a multitude of real-world problems. 

It should be noted that another form of poly-fractional inverse problems has previously been considered, where the fractional problem takes the form of a Caputo fractional-time derivative and a space-fractional Laplacian of Mittag-Leffler type. For more details of the problem setup, refer to \cite{TU2013SimultaneousFracExpTimeSpace}, \cite{TTU2016SimultaneousFracExpTimeSpace}, \cite{Guerngar2020UniquenessFracExponent}, \cite{Guerngar2021SimultaneousFracExponent} or the references therein. In these works, the authors made use of the spectral eigenfunction expansion of the weak solution to the initial/boundary value problem, to recover the fractional exponents from the initial value function. Such a method heavily relies on the structural definition of the fractional Laplacian, which is difficult to be generalised.

\subsection{Organisation of This Paper}
The rest of this paper is structured as follows. In Section \ref{sec:Prelims}, we provide rigorous mathematical formulations of the fractional operator $P((-\Delta_g)^s)$ for $s=(s_1,\dots,s_M)$, $0<s_1<\cdots,s_M<\infty$ and fractional Sobolev spaces. In Section \ref{sec:UCP}, we prove the unique continuation properties of $\tilde{P}((-\Delta_{\tilde{g}})^{\tilde{s}})$, which will form the basis of the study of our inverse problems. In Section \ref{sec:IP}, we prove the uniqueness in recovering the potential, the source function in the semilinear case, and the coefficients associated to the non-isotropy of the fractional operator. We end with some final remarks and open problems in Section \ref{sec:final}.

\section{Preliminaries and Statement of Main Results}\label{sec:Prelims}
	
\subsection{Fractional Sobolev spaces}
	
We first begin with a review of fractional Sobolev spaces. 

The fractional Sobolev spaces $H^s(\mathbb{R}^n)$ for all real positive $s$ are defined by
\begin{equation}\label{Hsnorm}H^s(\mathbb{R}^n)=\{u\in \mathcal{S}'(\mathbb{R}^n):\{\xi\mapsto(1+|\xi|^2)^{s/2}\hat{u}(\xi)\}\in L^2(\mathbb{R}^n)\},\end{equation} with norm \[\norm{u}_{H^s(\mathbb{R}^n)}=\norm{(1+|\xi|^2)^{s/2}\hat{u}}_{L^2(\mathbb{R}^n)},\] and its dual space \begin{equation}\label{Hdualtildedef}H^{-s}(\mathbb{R}^d):=\{\xi\in \mathcal{S}'(\mathbb{R}^d):\{1+|\xi|^{-s}\hat{\xi}\}\in L^2(\mathbb{R}^d)\},\end{equation} where $\mathcal{S}$ is the Schwartz space and $\mathcal{S}'$ the dual, and $\hat{u}(\xi)=\int_{\mathbb{R}^d}e^{-2\pi ix\cdot\xi}u(x)\,dx$ is the Fourier transform of $u$. 

Let $\omega\subset\mathbb{R}^n$ be an open set. We define the following fractional Sobolev spaces
\[\tilde{H}^s(\omega) := \text{ closure of }C_c^\infty(\omega) \text{ in } H^s(\mathbb{R}^n),\]
\[H^s(\omega) := \{u|_\omega : u\in H^s(\mathbb{R}^n)\},\]
which is complete with the norm 
\[\|u\|_{H^s(\omega)}:=\inf\left\{ \|v\|_{H^s(\mathbb{R}^n)}:\,v\in H^s(\mathbb{R}^n)\text{ and }v|_\omega=u\right\}.\]
We also define
\[H^s_0(\omega) := \text{ closure of }C_c^\infty(\omega) \text{ in } H^s(\omega)\]
and 
\[H_{\overline{\omega}}^{s}(\mathbb{R}^n):=\left\{u\in H^s(\mathbb{R}^{n}):supp(u)\subset\overline{\omega}\right\}.\]
Observe that $\tilde{H}^s(\omega)\subset H^s_0(\omega)$ and the duals $(\tilde{H}^s(\omega))^*= H^{-s}(\Omega)$ and $(H^s(\omega))^*=\tilde{H}^{-s}(\omega)$. If $\omega$ is, in addition, a Lipschitz domain, $\tilde{H}^s(\omega)=H_{\overline{\omega}}^{s}(\mathbb{R}^n)$ for all $s\in\mathbb{R}$ and $H^s_0(\omega)=H_{\overline{\omega}}^{s}(\mathbb{R}^n)$ for $r\geq0$, $r\neq \frac{2k+1}{2}$, $k\in\mathbb{N}$. For a more detailed discussion regarding these Sobolev spaces, we refer readers to the reference \cite{McLean2000book}.

\subsection{Nonlocal Operators Defined via the Fourier Transform}\label{subsec:OperatorDef}

We next give the definition of the anisotropic nonlocal operator $(-\Delta_\gamma)^\sigma$ via the spectral characterization, for $\sigma\in(0,\infty)$. We first recall that in the isotropic case, the higher order fractional Laplacian $(-\Delta)^\sigma$ is defined for smooth, compactly supported functions via the Fourier transform for all $0<\sigma<\infty$:
\begin{equation}\label{IsotropicFracLapDef}
    \widehat{(-\Delta)^\sigma u} = |\xi|^{2\sigma}\hat{u},\quad \hat{u}(\xi)=\int_{\mathbb{R}^n}e^{-i\xi\cdot x}u(x)\,dx.
\end{equation}

Next, suppose that $\gamma(x) \in C^\infty(\mathbb{R}^n)$ is a symmetric positive definite matrix-valued function such that there exists $\lambda\in (0, 1)$,
\[
\lambda|\xi|^2\leq \xi^T\gamma(x)\xi\leq \lambda^{-1}|\xi|^2\quad \text{ for all } x\in\Omega \text{ and } \xi\in\mathbb{R}^n.  
\]
Then, $-\Delta_\gamma:=-\nabla \cdot \gamma(x) \nabla$ is a general second order linear non-negative self-adjoint elliptic operator, which is densely defined on $L^{2}(\mathbb{R}^{n})$ for $n\geq1$ \cite{stinga2010extension}. By \cite{StingaFracLapTorusHighOrder}, one can define the higher order fractional Laplacian operator $(-\Delta_\gamma)^\sigma := (-\nabla \cdot \gamma\nabla)^\sigma$ for any $\sigma>0$, $s\not\in\mathbb{N}$ in a spectral way as
\begin{equation*}
(-\Delta_\gamma)^\sigma=(-\nabla \cdot \gamma\nabla)^\sigma:=\int_{0}^{\infty}\lambda^\sigma dE(\lambda),
\end{equation*} 
where $\left\{ E(\lambda)\right\} $ is the unique spectral resolution of $-\Delta_\gamma$ and $dE(\lambda)$ is a regular Borel complex measure of bounded variation. The domain of $(-\Delta_\gamma)^\sigma$ is inherited from this spectral representation, and it holds that $Dom(-\Delta_\gamma)^\sigma\supset Dom(-\Delta_\gamma)^{\lfloor\sigma\rfloor}$, where $\lfloor\sigma\rfloor$ is the integer part of $\sigma$.

In addition, we assume that $\gamma:\mathbb{R}^n\to\mathbb{R}^n\times\mathbb{R}^n$ is bounded and locally $C^{2\lfloor\sigma\rfloor,1}$ such that the homogeneous H\"older norm $[\cdot]_{\dot{C}^{k,\alpha}}$ is small, in the sense that
\[[\gamma^{ij}]_{\dot{C}^{2\lfloor\sigma\rfloor,1}(B_4)}+[\gamma^{ij}]_{\dot{C}^{0,1}(B_4)}\ll\epsilon\] for some small parameter $\epsilon>0$, where $B_4:=\{x\in\mathbb{R}^n:|x|\leq4\}$ is the unit ball. We also assume that $\gamma^{ij}(0)=\delta^{ij}$ for the Kronecker delta function $\delta$.
 
In this paper, we always assume that $\gamma$ satisfies these assumptions. With these definitions, we are ready to state the forward problem.

We first begin by recalling a Poincar\'e inequality and a Sobolev inequality for higher order fractional Laplacians.

\begin{lemma}[Poincar\'e inequality, given in Theorem 3.7 of \cite{CMR2021HigherOrderFracLapUCP}]\label{Poincare}
    Let $s\in\mathbb{R}_+\backslash\mathbb{Z}$, $K\subset\mathbb{R}^n$ be a compact set, and $u\in H^s_K(\mathbb{R}^n)$. Then there exists a constant $C>0$ depending on $n$, $K$ and $s$ such that 
    \[\norm{u}_{L^2(\mathbb{R}^n)} \leq C\norm{(-\Delta)^{s/2}u}_{L^2(\mathbb{R}^n)}.\]
\end{lemma}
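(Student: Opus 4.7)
The plan is to argue directly on the Fourier side, exploiting the compactness of $\mathrm{supp}(u)\subseteq K$ to tame the low-frequency part of $\hat{u}$, and using the spectral definition \eqref{IsotropicFracLapDef} of $(-\Delta)^{s/2}$ to absorb the high-frequency part. Since $u\in H^s_K(\mathbb{R}^n)\subset L^2(\mathbb{R}^n)$ is supported in the compact set $K$, Cauchy--Schwarz gives $u\in L^1(\mathbb{R}^n)$ with
\[
\|u\|_{L^1(\mathbb{R}^n)}\le |K|^{1/2}\,\|u\|_{L^2(\mathbb{R}^n)},
\]
so that $\hat{u}$ is continuous and obeys the uniform bound $\|\hat{u}\|_{L^\infty(\mathbb{R}^n)}\le |K|^{1/2}\|u\|_{L^2(\mathbb{R}^n)}$.

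The main step will then be a split of Plancherel's identity at a threshold $R>0$:
\[
\|u\|_{L^2(\mathbb{R}^n)}^2 \;=\; c_n\int_{|\xi|\le R}|\hat{u}(\xi)|^2\,d\xi \;+\; c_n\int_{|\xi|>R}|\hat{u}(\xi)|^2\,d\xi,
\]
for the appropriate Fourier-convention constant $c_n$. I would bound the low-frequency integral by the trivial estimate $|B_R|\cdot\|\hat{u}\|_{L^\infty}^2\lesssim R^n|K|\,\|u\|_{L^2}^2$, and bound the high-frequency integral by inserting the factor $|\xi|^{2s}/|\xi|^{2s}\ge R^{-2s}|\xi|^{2s}$, which by \eqref{IsotropicFracLapDef} yields at most $R^{-2s}\|(-\Delta)^{s/2}u\|_{L^2(\mathbb{R}^n)}^2$ up to a universal constant.

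The final step is to choose $R$ small enough (depending on $n$ and $|K|$) so that the coefficient of $\|u\|_{L^2}^2$ coming from the low-frequency piece is at most $1/2$; this is possible because $|K|<\infty$. Absorbing that term into the left-hand side then gives
\[
\|u\|_{L^2(\mathbb{R}^n)}^2\;\le\; C(n,K,s)\,\|(-\Delta)^{s/2}u\|_{L^2(\mathbb{R}^n)}^2,
\]
as claimed. I do not foresee a serious obstacle in this approach: the argument works for every $s>0$ and never actually uses $s\notin\mathbb{Z}$ or any dimensional restriction such as $2s<n$ that a fractional-Hardy approach would impose. The only minor bookkeeping issue is reconciling the two Fourier conventions appearing in \eqref{Hsnorm} and \eqref{IsotropicFracLapDef}, which affects only the numerical value of $C$ and not the structure of the estimate; a standard density argument (approximating $u$ by convolutions $\phi_\varepsilon\ast u$ supported in a fixed enlargement of $K$) can also be invoked if one wishes to reduce first to $u\in C_c^\infty$, though the Fourier argument above already applies verbatim to $u\in H^s_K$.
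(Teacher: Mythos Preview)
Your argument is correct. The paper does not actually prove this lemma; it is merely quoted with a citation to \cite{CMR2021HigherOrderFracLapUCP}, so there is no ``paper's own proof'' to compare against here. Your Fourier-splitting approach (low frequencies controlled by $\|\hat u\|_{L^\infty}\le\|u\|_{L^1}\le |K|^{1/2}\|u\|_{L^2}$, high frequencies absorbed into $\|(-\Delta)^{s/2}u\|_{L^2}$, then choosing the threshold $R$ to absorb) is a standard and complete self-contained proof. Your observation that the hypothesis $s\notin\mathbb{Z}$ plays no role in this particular inequality is also correct; that restriction is carried in the lemma statement only because the surrounding theory (unique continuation for $(-\Delta)^s$) requires it elsewhere, not because the Poincar\'e estimate itself needs it.
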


\begin{lemma}[Sobolev inequality, given in Theorem 2.2 of \cite{railo2022fractional}]
    Let $\Omega\subset\mathbb{R}^n$ be a bounded open set and $u\in \tilde{H}^s(\Omega)$. Suppose that $0\leq r\leq s<\infty$. Then there exists a constant $C>0$ depending on $n$, $r$, $s$ and $\Omega$ such that 
    \[\norm{(-\Delta)^{r/2} u}_{L^2(\mathbb{R}^n)} \leq C \norm{(-\Delta)^{s/2} u}_{L^2(\mathbb{R}^n)}.\]
\end{lemma}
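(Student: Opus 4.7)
The plan is to combine Plancherel's theorem with a straightforward frequency-splitting argument and the Poincar\'e inequality of Lemma~\ref{Poincare}. Since $u \in \tilde{H}^s(\Omega)$ with $\Omega$ bounded, $u$ is compactly supported in $\overline{\Omega}$, so the Fourier transform $\hat{u}$ is well-defined and one has the identity
\[
\|(-\Delta)^{t/2} u\|_{L^2(\mathbb{R}^n)}^2 = \int_{\mathbb{R}^n} |\xi|^{2t} |\hat{u}(\xi)|^2 \, d\xi
\]
for any $t \geq 0$.

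First, I would split the frequency domain in the expression for $\|(-\Delta)^{r/2} u\|_{L^2(\mathbb{R}^n)}^2$ into the low-frequency region $\{|\xi| \leq 1\}$ and the high-frequency region $\{|\xi| > 1\}$. On the high-frequency region the assumption $r \leq s$ gives $|\xi|^{2r} \leq |\xi|^{2s}$, and on the low-frequency region $|\xi|^{2r} \leq 1$ since $r \geq 0$. This immediately produces
\[
\|(-\Delta)^{r/2} u\|_{L^2(\mathbb{R}^n)}^2 \leq \|u\|_{L^2(\mathbb{R}^n)}^2 + \|(-\Delta)^{s/2} u\|_{L^2(\mathbb{R}^n)}^2.
\]
Second, I would apply Lemma~\ref{Poincare} with $K = \overline{\Omega}$ to control $\|u\|_{L^2(\mathbb{R}^n)}$ by a constant times $\|(-\Delta)^{s/2} u\|_{L^2(\mathbb{R}^n)}$; substituting this into the previous estimate yields the conclusion with a constant depending on $n$, $r$, $s$ and $\Omega$.

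The only (minor) obstacle is that Lemma~\ref{Poincare} is stated for $s \in \mathbb{R}_+ \setminus \mathbb{Z}$. The case $s = 0$ forces $r = 0$ and is trivial. When $s$ is a positive integer, one can instead appeal to the classical Poincar\'e inequality for integer-order Sobolev spaces applied to the compactly supported function $u$, together with the Plancherel identity $\|\nabla^s u\|_{L^2(\mathbb{R}^n)}^2 = \int |\xi|^{2s}|\hat{u}|^2 d\xi$ (up to a harmless combinatorial factor absorbed into the constant), to obtain the same $L^2$-to-$H^s$ Poincar\'e bound. Alternatively, one may fix any non-integer $s' \in (0,s]$ and apply Lemma~\ref{Poincare} at exponent $s'$ after bounding $\|(-\Delta)^{s'/2}u\|_{L^2}$ by the same frequency-splitting argument. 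In either case the substance of the proof is Plancherel plus Poincar\'e, with no deeper technical input required.
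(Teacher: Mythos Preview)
The paper does not actually supply a proof of this lemma: it is quoted verbatim as Theorem~2.2 of \cite{railo2022fractional} and invoked as a black box, so there is no in-paper argument to compare against. Your proposal is correct and is essentially the standard proof one would expect for such a statement---Plancherel, split at $|\xi|=1$, then absorb the low-frequency $L^2$ piece via a Poincar\'e inequality for compactly supported functions. Your handling of the integer-$s$ edge case (either classical Poincar\'e or bootstrapping through a nearby non-integer exponent) is fine; the only small remark is that an element of $\tilde H^s(\Omega)$ is an $H^s$-limit of $C_c^\infty(\Omega)$ functions, so its support lies in $\overline{\Omega}$, which is compact because $\Omega$ is bounded---this justifies invoking Lemma~\ref{Poincare} with $K=\overline{\Omega}$, and you might make that one sentence explicit. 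Note also that your constant in fact depends only on $n$, $s$, and $\Omega$, not on $r$, which is slightly sharper than the stated claim.
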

Note that the two constants $C$ in these two inequalities are different.

For any operator
\[A((-\Delta_g)^\alpha) := \alpha_1 (-\Delta_{g_1})^{a_1} + \alpha_2 (-\Delta_{g_2})^{a_2} + \cdots + \alpha_p (-\Delta_{g_p})^{a_p},\] 
with $\alpha=(\alpha_1,\dots,\alpha_p)$, $0\leq a_1<\cdots<a_p<\infty$, $g=(g_1,\dots,g_p)$ for some $p\in\mathbb{N}$, $p<\infty$, $a_i\in\mathbb{R}$ and $\alpha_i>0$, $\alpha_i\in L^\infty(\mathbb{R}^n)$ for $i=1,\dots,p$,
we define the operator 
\[A_q:=A((-\Delta_g)^\alpha) + q \quad\text{ in }\Omega\]
with 
\[A_q(u,v):=\left\langle \alpha_1 ((-\Delta_{g_1})^{a_1}u), v \right\rangle + \left\langle \alpha_2 ((-\Delta_{g_2})^{a_2}u), v \right\rangle + \cdots + \left\langle \alpha_p ((-\Delta_{g_p})^{a_p}u), v\right\rangle + (qu,v)_\Omega.\]
Here, we write
\begin{align*}
\langle \varphi,\psi \rangle :=&\int_{\mathbb{R}^n}\varphi \psi \, dx, \text{ for any }\varphi,\psi \in L^2(\mathbb{R}^n),\\
(\varphi,\psi)_{\Omega}:=&\int_{\Omega} \varphi \psi \, dx, \text{ for any }\varphi ,\psi \in L^2(\Omega). 
\end{align*}

\begin{theorem}\label{ExistThm}
    Let $\alpha=(\alpha_1,\dots,\alpha_p)$, $0\leq a_1<\cdots<a_p<\infty$ for some $p\in\mathbb{N}$, $p<\infty$, $a_i\in\mathbb{R}$ and $\alpha_i>0$, $\alpha_i\in L^\infty(\mathbb{R}^n)$ for $i=1,\dots,p$, and $\gamma$ satisfies the assumptions above. Assume in addition that $A$ is a coercive bounded operator with domain $H^{a_p}(\mathbb{R}^n)$. 
    Let $f\in H^{\alpha_p}(\mathbb{R}^n)$ and $F\in H^{-\alpha_p}(\Omega)$. Then the problem
    \begin{equation}
        \begin{cases}
            A((-\Delta_g)^\alpha)u=F &\text{ in }\Omega,\\
            u=f &\text{ in }\Omega^c
        \end{cases}
    \end{equation}
    has a unique weak solution $u\in H^{\alpha_p}(\mathbb{R}^n)$, i.e. $u$ satisfies
    \[\langle A((-\Delta_g)^\alpha)u,v\rangle = \langle F,v\rangle\quad\text{ for all }v\in \tilde{H}^{\alpha_p}(\Omega),\]

    Moreover, $u$ satisfies 
    \begin{equation}\label{SolnEst}
        \sum_{i=1}^p\norm{u}_{H^{\alpha_i}(\mathbb{R}^n)}\leq C \left( \norm{F}_{ (H^\alpha(\Omega))^* } + \norm{f}_{ (H^\alpha(\Omega^c))^* } \right) ,
    \end{equation} 
    for some constant $C>0$ independent of $u$, $f$ and $F$. Here 
    \[
    \norm{\cdot}_{ (H^\alpha(\Omega))^*} := \sum_{i=1}^p\norm{F}_{ (H^{\alpha_i}(\Omega))^* }.
    \]
\end{theorem}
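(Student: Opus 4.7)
The plan is to apply the Lax--Milgram theorem to a reduced variational formulation, after subtracting off the exterior data. Since $f \in H^{a_p}(\mathbb{R}^n)$ is given on all of $\mathbb{R}^n$, I would write $u = v + f$ with the new unknown $v$ required to lie in $\tilde{H}^{a_p}(\Omega)$. Because $\Omega$ is Lipschitz and $a_p \notin \mathbb{Z}+\tfrac12$ may be arranged (or one passes to $H^{a_p}_{\overline{\Omega}}(\mathbb{R}^n)$ directly), the space $\tilde{H}^{a_p}(\Omega)$ consists exactly of $H^{a_p}(\mathbb{R}^n)$-functions vanishing in $\Omega^c$, so the exterior condition $u = f$ in $\Omega^c$ is encoded automatically. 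The weak formulation for $u$ then becomes: find $v \in \tilde{H}^{a_p}(\Omega)$ such that
\begin{equation*}
\mathcal{B}(v,w) \;=\; \langle F, w\rangle - \mathcal{B}(f,w) \;=:\; \ell(w) \qquad \text{for all } w \in \tilde{H}^{a_p}(\Omega),
\end{equation*}
where $\mathcal{B}(v,w) := \sum_{i=1}^{p} \langle \alpha_i (-\Delta_{g_i})^{a_i} v,\, w\rangle$.

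The next step is to verify the Lax--Milgram hypotheses on the Hilbert space $\tilde{H}^{a_p}(\Omega)$. Boundedness of $\mathcal{B}$ follows from $\alpha_i \in L^\infty(\mathbb{R}^n)$ together with the spectral definition of $(-\Delta_{g_i})^{a_i}$: each such operator maps $H^{a_p}(\mathbb{R}^n)$ continuously into $H^{a_p - 2a_i}(\mathbb{R}^n)$, and the pairing against $w\in \tilde{H}^{a_p}(\Omega)$ is controlled using $a_i \le a_p$. Coercivity of $\mathcal{B}$ on $\tilde{H}^{a_p}(\Omega)$ is handed to us directly by the standing hypothesis that $A$ is coercive on $H^{a_p}(\mathbb{R}^n)$, combined with the Poincar\'e inequality of Lemma~\ref{Poincare} applied to the compactly supported test functions (converting the seminorm $\|(-\Delta)^{a_p/2}\cdot\|_{L^2}$ to the full $H^{a_p}$-norm). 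For the data: $F \in H^{-a_p}(\Omega) = (\tilde{H}^{a_p}(\Omega))^{*}$ pairs naturally with $w$, and $\mathcal{B}(f,w)$ is controlled by $\|f\|_{H^{a_p}(\mathbb{R}^n)}\|w\|_{H^{a_p}(\mathbb{R}^n)}$ by the same boundedness argument. Lax--Milgram then delivers a unique $v \in \tilde{H}^{a_p}(\Omega)$, whence $u := v+f$ is the unique weak solution; uniqueness in the whole class follows because any two solutions differ by a function in $\tilde{H}^{a_p}(\Omega)$ that solves the homogeneous problem, which the coercivity rules out.

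The estimate~\eqref{SolnEst} comes from Lax--Milgram in the form $\|v\|_{H^{a_p}(\mathbb{R}^n)} \lesssim \|\ell\|_{(\tilde{H}^{a_p}(\Omega))^{*}} \lesssim \|F\|_{H^{-a_p}(\Omega)} + \|f\|_{H^{a_p}(\mathbb{R}^n)}$, and since $u = v + f$ the same bound holds for $\|u\|_{H^{a_p}(\mathbb{R}^n)}$. To recover the full sum $\sum_{i=1}^{p}\|u\|_{H^{a_i}(\mathbb{R}^n)}$ on the left-hand side I would invoke the Sobolev embedding lemma quoted just before the statement: since $0 \le a_i \le a_p$, each $\|u\|_{H^{a_i}(\mathbb{R}^n)}$ is controlled by $\|u\|_{H^{a_p}(\mathbb{R}^n)}$, yielding the displayed bound after absorbing constants.

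The main obstacle I anticipate is making the coercivity hypothesis on $A$ genuinely usable, since the coefficients $\alpha_i(x)$ are only $L^\infty$ and the lower-order nonlocal terms $\alpha_i (-\Delta_{g_i})^{a_i}$ with $a_i < a_p$ need not individually control the top-order norm. The natural route is to lean on the leading term $\alpha_p (-\Delta_{g_p})^{a_p}$ (under a positive lower bound on $\alpha_p$ and the smallness/H\"older assumptions on $g_p$ stated in Section~\ref{subsec:OperatorDef}) to produce a G\r{a}rding-type inequality $\mathcal{B}(v,v) \geq c\|v\|_{H^{a_p}}^{2} - C\|v\|_{L^2}^{2}$; the Poincar\'e inequality then upgrades this to true coercivity on $\tilde{H}^{a_p}(\Omega)$. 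A secondary bookkeeping subtlety is the meaning of $(H^{\alpha}(\Omega))^{*}$ in~\eqref{SolnEst}: the correct dual pairing with $\tilde{H}^{a_p}(\Omega)$ is through $H^{-a_p}(\Omega)$, and the sum notation has to be understood componentwise and tracked consistently through each estimate.
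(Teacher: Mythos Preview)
Your proposal is correct and matches the paper's approach exactly: the paper explicitly skips the proof, remarking only that it ``follows by a simple application of the Lax--Milgram theorem, since $A$ is assumed to be coercive and bounded in $H^{\alpha_p}(\mathbb{R}^n)$.'' Your detailed reduction $u = v+f$ with $v\in\tilde H^{a_p}(\Omega)$ is the standard execution of this, and your anticipated obstacle regarding coercivity is moot since both boundedness and coercivity are taken as standing hypotheses rather than derived.
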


We skip the proof of Theorem \ref{ExistThm} here, since the proof follows by a simple application of the Lax-Milgram theorem, since $A$ is assumed to be coercive and bounded in $H^{\alpha_p}(\mathbb{R}^n)$. We note that a sufficient condition for $A$ to be coercive and bounded is when the coefficients $\alpha_i$ are Sobolev multipliers (see \cite{CMRU2022HigherOrderFracPolyUniqueness} for more details).

\begin{remark}
    Special cases of this result can be found in Lemma 3.4 of \cite{CMRU2022HigherOrderFracPolyUniqueness} with a single higher order fractional Laplacian and local lower order terms, in Theorem 1.1 of \cite{BiagiDipierroValdinociVecchi2022CommPDEMixedLocalNonlocal} with a local classical Laplacian and lower order fractional terms, in Lemma 5.1 of \cite{CMR2021HigherOrderFracLapUCP} with a single higher order fractional Laplacian and a single rough potential, and in \cite{CabreSerra2016ExtensionProblem} for infinitely many fractional Laplacians of order $0<s<1$. See also \cite{BiagiMeglioliPunzo2023-MixedLocalNonlocalUniqueness} which explores the uniqueness of the solution in the case of a local classical Laplacian and a single lower order fractional Laplacian, in the case where a zeroth order term in $A$ violates the coercivity of $A$.
\end{remark}

With the well-posedness at hand, we are able to define the corresponding DtN map $\mathcal{M}_{q,\alpha}$ rigorously, by
\begin{equation}
    \left\langle \mathcal{M}_{q,\alpha} f, g \right\rangle:= A_q (u_f, g),
\end{equation}
where $u_f$ is the unique solution to \eqref{main}, and $g\in C^\infty_0(\mathbb{R}^n)$ could be arbitrary.

\subsection{Admissibility Conditions}
\begin{definition}
    For abelian linear operators $(-\Delta_\gamma):\mathcal{S}(\mathbb{R}^n)\to L^2(\mathbb{R}^n)$, as defined in the previous section, acting on the Schwartz space $\mathcal{S}(\mathbb{R}^n)$, we define $\mathcal{A}$ to be the admissible algebra of all real posynomials $A((-\Delta_g)^a)$ with positive $\alpha_i(x)$ coefficients by 
    \[A((-\Delta_g)^a) := \alpha_1(x) (-\Delta_{g_1})^{a_1} + \alpha_2(x) (-\Delta_{g_2})^{a_2} + \cdots + \alpha_p(x) (-\Delta_{g_2})^{a_p}, \]
    $0\leq a_1<\cdots<a_p<\infty$, $p<\infty$, $a_i\in\mathbb{R}$, $p\in\mathbb{N}$, 
    such that $A$ is coercive and bounded in $H^{\alpha_p}(\mathbb{R}^n)\supset H^{s_M}(\mathbb{R}^n)$, and there exists another real posynomial $B$ with (possibly negative) $L^\infty(\mathbb{R}^n)$ coefficients 
    such that their (usual, pointwise-defined) product $BA$ is of the form 
    \[B((-\Delta_{g'})^{a'})A((-\Delta_g)^a) = \sum_{j=1}^{J} c_j(x) (-\Delta_{\breve{g}_j})^{n_j} + c_r(x) (-\Delta_{\breve{g}_0})^r,
    \]
    for $0\leq n_1<\cdots<n_J<\infty$, $r>0$, $J<\infty$ with $n_j\in\mathbb{N}$, $r\not\in\mathbb{N}$, $c_j,c_r\neq0$ for all $j=1,\dots,J$, for some $\breve{g}_j$.

    By density, this definition can be extended to any operator $(-\Delta_\gamma):L^2(\mathbb{R}^n)\to L^2(\mathbb{R}^n)$, with $(-\Delta_\gamma)u=\infty$ if $u\not\in H^{n_J}(\mathbb{R}^n)\cap H^r(\mathbb{R}^n)$.
\end{definition}

It is obvious that for all real posynomials $p\in\mathbb{R}[z]$, $p=\lambda_1z^{l_1}+\lambda_2z^{l_2}+\cdots+\lambda_Lz^{l_L}$, given an operator $T$ in the space of all abelian linear operators from $\mathcal{S}(\mathbb{R}^n)$ to $L^2(\mathbb{R}^n)$, we can write $p(T)=\lambda_1T^{l_1}+\lambda_2T^{l_2}+\cdots+\lambda_LT^{l_L}$, and the map $p\mapsto p(T)$ is a unital homomorphism. Therefore, our definition above makes sense.

Furthermore, the operator $(-\Delta_\gamma)^s$ is clearly commutative for any $0<s<\infty$ by definition.

It should be remarked that the admissible set $\mathcal{A}$ is non-empty. We give here two examples.

\hypertarget{Eg1}{\emph{Example 1.}} Suppose that $g=(g_1,\dots,g_p)$ is such that $g_i=\gamma_i\hat{g}$ for some constant $\gamma_i$, and $\hat{g}$ satisfying the assumptions of the previous section. For $p\geq2$, if the constant coefficients $\alpha_i$ of $A((-\Delta_g)^a) := \sum_{i=1}^p \alpha_i(-\Delta_{g_i})^{a_i}$ satisfy the relation $2\alpha_i=\alpha_{i-1}+\alpha_{i+1}$, then it can be easily seen that $A\in\mathcal{A}$. On the other hand, for $p=2$, it can also be easily verified that if  the constant coefficients $\alpha_1,\alpha_2$ are such that $2(\alpha_2-\alpha_1)\not\in\mathbb{N}$, then $A\in\mathcal{A}$. Note that for any $p$, we can take $B((-\Delta_{g'})^{a'})$ such that $g'_i=\gamma'_i\hat{g}$ for some constant $\gamma_i'$. 

\hypertarget{Eg2}{\emph{Example 2.}} Suppose $A((-\Delta_g)^a)$ is of the form $\sum_{n=1}^N \alpha_n(x) (-\Delta_{g_n})^n + \alpha_0(x)(-\Delta_{g_0})^s$ for any $n$ which may be larger or smaller than $s$. Then it suffices to take $B(T)=1$ to satisfy the admissibility condition. Note that this covers the case of a single higher order fractional Laplacian with lower order local perturbation in \cite{CMRU2022HigherOrderFracPolyUniqueness}.

\subsection{Mathematical Setup and Statement of the Main Results}

With these definitions, we are able to state our problem concretely. 
	
Let $\Omega \subset \mathbb{R}^n$ be a bounded Lipschitz domain, for $n\in \mathbb{N}$, and $q\in L^\infty(\Omega)$. Given $s=(s_1,\dots,s_M)$, $0<s_1<\cdots<s_M<\infty$, $s_M\in\mathbb{R}_+\backslash\mathbb{Z}$, we consider the exterior value problem 
\begin{equation}\label{main}
    \begin{cases}
        P( (-\Delta_g)^s)u(x) + q(x)u(x) := \sum_{i=1}^M \alpha_i(x)(-\Delta_{g_i})^{s_i}u(x) + q(x)u(x) =0 &\text{ in }\Omega, \\
        u(x)=f(x) &\text{ in }\Omega^c:=\mathbb{R}^n\backslash \overline{\Omega}.
    \end{cases}
\end{equation}
As discussed in the Introduction, we assume that $P( (-\Delta_g)^s) + q$ is a coercive bounded operator in $H^{s_M}(\mathbb{R}^n)$ and satisfies \eqref{qCond}. 
    
We are interested in the unique determination inverse problem for fractional polyharmonic equations. Let $\alpha_i^j,q_j\in L^\infty(\mathbb{R}^n)$, consider 
\begin{equation}\label{mainJ}
    \begin{cases}
        P( (-\Delta_g)^s)_j u_j + q_ju_j := \sum_{i=1}^M \alpha_i^j(-\Delta_{g_i})^{s_i}u_j + q_ju_j =0 &\text{ in }\Omega, \\
        u_j=f_j &\text{ in }\Omega^c
    \end{cases}
\end{equation} 
We also define 
\begin{equation}\label{tildePDef}
    \tilde{P}( (-\Delta_{\tilde{g}})^{\tilde{s}})u := \sum_{i=1}^{M'} \tilde{\alpha}_i(-\Delta_{\tilde{g}_i})^{\tilde{s}_i}u \quad \text{ for }\tilde{s}=(\tilde{s}_1,\dots,\tilde{s}_{M'}), 0\leq\tilde{s}_1<\cdots<\tilde{s}_{M'}\leq s_M
\end{equation} 
for $\tilde{P}\in\mathcal{A}$, $\tilde{g}=(\tilde{g}_1,\dots,\tilde{g}_{M'})$. 
Then, given the well-posedness of \eqref{main}, we define the Dirichlet-to-Neumann (DtN) map  
\begin{equation}
    \mathcal{M}_{q_j,\alpha^j} : H^{s_M}(\Omega^c)\to H^{-s_M}(\Omega), \quad f\mapsto \left. \tilde{P}( (-\Delta_{\tilde{g}})^{\tilde{s}}) u_f \right|_{\Omega} ,\quad j=1,2,
\end{equation}
where $u_f \in H^{s_M}(\mathbb{R}^n)\subset Dom(P)\cap Dom(\tilde{P})$ is the unique solution to \eqref{main}. Here, $Dom(P)$ denotes the domain of the operator $P( (-\Delta_g)^s)$, and we write similarly for $Dom(\tilde{P})$. 

Then, we can prove the following result:
\begin{theorem}[First uniqueness result for potential $q$]\label{MainThm1q}
    Let $\Omega \subset \mathbb{R}^n$ be a bounded Lipschitz domain, for $n\in \mathbb{N}$, and $q_j\in L^\infty(\Omega)$ for $j=1,2$. Suppose that for given nonzero $f=f_1=f_2\in  H^{s_M}(\Omega^c)$, the non-trivial unique solutions $u_{f_1},u_{f_2}\in H^{s_M}(\mathbb{R}^n)\cap C_c^\infty(\mathbb{R}^n)$ is such that $u_{f_1}(x)\neq0$ in some subset $E\subset\Omega$. Then 
    \begin{equation}
		\mathcal{M}_{q_1,\alpha}f = \mathcal{M}_{q_2,\alpha}f
    \end{equation}
    implies $q_1=q_2$ in $E$.
\end{theorem}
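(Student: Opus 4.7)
The plan is to reduce the identifiability of $q$ to the unique continuation principle (Theorem \ref{UCPFormal}) by working with the difference of the two solutions. Set $w := u_{f_1} - u_{f_2}$. Because the fractional operator $P((-\Delta_g)^s)$ is the same on both sides (only $q$ varies in this theorem), and because the exterior data agree ($u_{f_1}|_{\Omega^c} = f = u_{f_2}|_{\Omega^c}$), one obtains for free that $w = 0$ in $\Omega^c$. From the hypothesis $\mathcal{M}_{q_1,\alpha} f = \mathcal{M}_{q_2,\alpha} f$ and the linearity of $\tilde{P}((-\Delta_{\tilde{g}})^{\tilde{s}})$, it follows that
\[
\tilde{P}((-\Delta_{\tilde{g}})^{\tilde{s}}) w \big|_\Omega \;=\; \mathcal{M}_{q_1,\alpha} f \;-\; \mathcal{M}_{q_2,\alpha} f \;=\; 0 \quad \text{in } \Omega.
\]

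Next, I would feed the pair $(w, \tilde{P})$ into the unique continuation principle (Theorem \ref{UCPFormal}, in the concrete form of Theorems \ref{UCPThm2}--\ref{UCPThm3}), taking the vanishing set $W$ to be (a subset of) $\Omega^c$. Since $\tilde{P} \in \mathcal{A}$ and $w$ inherits regularity from $u_{f_1}, u_{f_2} \in H^{s_M}(\mathbb{R}^n) \cap C_c^\infty(\mathbb{R}^n)$, the hypotheses of the UCP should be directly verifiable. The conclusion $w \equiv 0$ in $\mathbb{R}^n$ then yields $u_{f_1} \equiv u_{f_2}$ pointwise; call this common solution $u$.

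Substituting $u_{f_1} = u_{f_2} = u$ into \eqref{mainJ} for $j=1,2$ and subtracting cancels the poly-fractional part (same $\alpha_i$), leaving
\[
(q_1 - q_2)\, u \;=\; 0 \quad \text{in } \Omega.
\]
Since the hypothesis guarantees $u(x) = u_{f_1}(x) \neq 0$ for every $x \in E$, one may divide by $u$ pointwise on $E$ to conclude $q_1 = q_2$ on $E$, which is the desired identification.

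The main obstacle is the UCP step: one must verify that $w$ lies in the appropriate function class demanded by Theorems \ref{UCPThm2}--\ref{UCPThm3} and that the admissible algebra $\mathcal{A}$ contains (or can be enlarged to contain) the operator $\tilde{P}$ used in the measurement, so that a companion $B$ with $B\tilde{P}$ of the posynomial form in the admissibility definition exists; this is precisely what enables the reduction of the poly-fractional UCP to a higher-order fractional UCP through the Caffarelli--Silvestre-type extension. Aside from this, the other steps (vanishing of $w$ on $\Omega^c$, linearity of the measurement, and pointwise cancellation using the non-vanishing assumption on $E$) are essentially mechanical. The regularity assumption $u_{f_j} \in H^{s_M}(\mathbb{R}^n) \cap C_c^\infty(\mathbb{R}^n)$ in the theorem appears tailored to guarantee that this UCP step goes through cleanly.
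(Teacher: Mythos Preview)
Your proposal is correct and follows essentially the same route as the paper: set $\tilde u = u_{f_1}-u_{f_2}$, use the equality of DtN data and exterior values to verify the hypotheses of the UCP Theorem~\ref{UCPThm2}, conclude $\tilde u\equiv 0$, and then subtract the two equations in~\eqref{mainJ} to obtain $(q_1-q_2)u_{f_1}=0$ in $\Omega$, whence $q_1=q_2$ on $E$. The only cosmetic difference is that the paper invokes Theorem~\ref{UCPThm2} directly (with $W=\Omega^c$) rather than passing through Theorem~\ref{UCPThm3}, and your concerns about the admissibility of $\tilde P$ and the regularity of $w$ are precisely the standing assumptions $\tilde P\in\mathcal A$ and $u_{f_j}\in H^{s_M}(\mathbb{R}^n)\cap C_c^\infty(\mathbb{R}^n)$ built into the statement.
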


We always assume that $g=(g_1,\dots,g_M)$ is fixed. 

Furthermore, we have the following result under additional assumptions.

\begin{theorem}[Second uniqueness result for potential $q$]\label{MainThm2q}
    Let $\Omega \subset \mathbb{R}^n$ be a bounded Lipschitz domain, for $n\in \mathbb{N}$, and $q_j\in L^\infty(\Omega)$ for $j=1,2$. Let $W\subset \Omega^c$ be a nonempty open set. Suppose that for given nonzero $f=f_1=f_2\in C^\infty_c (W)$, the non-trivial unique solutions $u_{f_1},u_{f_2}\in H^{s_M}(\mathbb{R}^n)\cap C_c^\infty(\mathbb{R}^n)$ is such that $u_{f_1}(x)\neq0$ in some subset $E\subset\Omega$, and satisfies $\mathcal{L}u(x)=g(x)$ in $\Omega^c$ for some (local) second order elliptic operator $\mathcal{L}$ and $g(x)\in C^\infty_c(\overline{\Omega^c})$. Then, the relation 
    \begin{equation}
		\mathcal{M}_{q_1,\alpha}f = \mathcal{M}_{q_2,\alpha}f
    \end{equation}
    implies $q_1=q_2$ in $E$.
\end{theorem}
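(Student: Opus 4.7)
The plan is to follow the standard difference-and-UCP paradigm, with the principal novelty being the specific unique continuation property that is invoked. First I would set $u := u_{f_1} - u_{f_2}$. Since $f_1 = f_2 = f$ and both solutions coincide with $f$ on $\Omega^c$, we have $u \equiv 0$ in $\Omega^c$; in particular $u \equiv 0$ on the open set $W$. Moreover, subtracting the two copies of the auxiliary exterior equation $\mathcal{L} u_{f_j} = g$ yields the homogeneous version $\mathcal{L} u = 0$ in $\Omega^c$. Finally, by linearity of $\tilde{P}((-\Delta_{\tilde{g}})^{\tilde{s}})$ in the definition of the DtN map, the hypothesis $\mathcal{M}_{q_1,\alpha} f = \mathcal{M}_{q_2,\alpha} f$ translates directly to
\[
\tilde{P}((-\Delta_{\tilde{g}})^{\tilde{s}}) u = 0 \quad \text{in } \Omega.
\]

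Next I would invoke the sharp unique continuation principle developed in Section \ref{sec:UCP} (the concrete realization of Theorem \ref{UCPFormal}) applied to the difference $u$. The hypotheses of that UCP are exactly those gathered above: $u$ lies in $H^{s_M}(\mathbb{R}^n) \cap C_c^\infty(\mathbb{R}^n)$ (inherited from the two solutions), it satisfies the poly-fractional equation $\tilde{P} u = 0$ in $\Omega$, it vanishes on the nonempty open set $W \subset \Omega^c$, and it satisfies a second order local elliptic equation $\mathcal{L} u = 0$ in $\Omega^c$. The UCP therefore forces $u \equiv 0$ throughout $\mathbb{R}^n$, i.e.\ $u_{f_1} \equiv u_{f_2}$.

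With this identification in hand, I substitute $u_{f_1} \equiv u_{f_2}$ into the two interior equations in \eqref{mainJ}. Since the nonlocal part $P((-\Delta_g)^s) = \sum_i \alpha_i (-\Delta_{g_i})^{s_i}$ is common to both equations (only the potential $q$ varies between configurations), subtraction gives
\[
(q_1(x) - q_2(x))\, u_{f_1}(x) = 0 \quad \text{in } \Omega.
\]
Restricting to the subset $E \subset \Omega$ on which $u_{f_1}$ is by hypothesis nonvanishing yields $q_1 = q_2$ on $E$, the desired conclusion.

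The decisive step, and the main obstacle, is the application of the unique continuation principle in the second paragraph. Unlike Theorem \ref{MainThm1q}, where the difference $u$ vanishes on the entire exterior $\Omega^c$ and a cruder form of the UCP suffices, here one must propagate the vanishing from the possibly small open set $W$ across $\partial \Omega$, and it is precisely for this that the auxiliary local elliptic equation in $\Omega^c$ and the admissibility $\tilde{P} \in \mathcal{A}$ (in particular the existence of an auxiliary factor $B$ such that $B\tilde{P}$ has the structure required by the definition of $\mathcal{A}$) are exploited. Checking that these hypotheses genuinely align with those required by the concrete UCP theorem, and that the product structure induced by $B$ interacts correctly with the exterior second order operator $\mathcal{L}$, is the core technical point of the argument.
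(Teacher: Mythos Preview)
Your approach matches the paper's: set $\tilde u=u_{f_1}-u_{f_2}$, verify the hypotheses of Theorem~\ref{UCPThm3} (namely $\tilde P((-\Delta_{\tilde g})^{\tilde s})\tilde u=0$ in $\Omega$, $\mathcal L\tilde u=0$ in $\Omega^c$, and $\tilde u=0$ in $W$), conclude $\tilde u\equiv 0$ in $\mathbb R^n$, and then subtract the interior equations to get $(q_1-q_2)u_{f_1}=0$ on $\Omega$, hence $q_1=q_2$ on $E$. One small inconsistency to clean up: in your first paragraph you already assert $u\equiv 0$ on all of $\Omega^c$ from the exterior data, which if used would let you bypass $\mathcal L$ entirely and invoke Theorem~\ref{UCPThm2} as in Theorem~\ref{MainThm1q}; the paper's proof deliberately uses only $\tilde u|_W=0$ and routes through Theorem~\ref{UCPThm3}, which is precisely the new ingredient distinguishing this result, so your last paragraph has the emphasis right and the first paragraph should be weakened accordingly.
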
 

Furthermore, we can extend our results to consider semilinear equations of the form 
\begin{equation}\label{mainSemilin}
    \begin{cases}
        P( (-\Delta_g)^s)u + F(x,u) =0 &\text{ in }\Omega, \\
        u=f &\text{ in }\Omega^c.
    \end{cases}
\end{equation}
Suppose $F$ is analytic up to order $L$, i.e. $F$ can be written as a power series 
\[F(x,u)=\sum_{\ell=0}^L F^{(\ell)}(x)\frac{u^\ell}{\ell!},\]
where $ F^{(\ell)}(x)=\frac{\partial^\ell F}{\partial u^\ell}(x,0)\in L^\infty(\Omega)$. Then we have the following corresponding results:
\begin{theorem}[Uniqueness result for source $F$]\label{MainThm1Semilin}
    Let $\Omega \subset \mathbb{R}^n$ be a bounded Lipschitz domain, for $n\in \mathbb{N}$, and $F_j$ with the analyticity defined above for $j=1,2$. For any nonzero $f=f_1=f_2\in  H^{s_M}(\Omega^c)$ with corresponding solutions $u_{f_1},u_{f_2}\in H^{s_M}(\mathbb{R}^n)\cap C_c^\infty(\mathbb{R}^n)$ such that $u_{f_1}(x)\neq0$ in some subset $E\subset\Omega$, 
    \begin{equation}
		\mathcal{M}_{F_1,\alpha}f = \mathcal{M}_{F_2,\alpha}f, 
    \end{equation}
    implies $F_1=F_2$ in $E$, with $F^{(0)}_1\equiv F^{(0)}_2$ in $\Omega$.
    
    Suppose instead we take nonzero $f_1,f_2\in C^\infty_c (W)$ for a nonempty open set $W\subset \Omega^c$, such that $u_{f_1},u_{f_2}$ are as in Theorem \ref{MainThm2q}. Then the same result holds.
\end{theorem}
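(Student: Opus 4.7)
The plan parallels the proof of Theorem \ref{MainThm1q}: first apply the UCP to collapse the two solutions $u_{f_1}$ and $u_{f_2}$ into a single function $u$, then subtract the two semilinear PDEs to isolate the difference of the nonlinearities on the common solution, and finally exploit analyticity in $u$ to pin down the zeroth-order coefficient.

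First, I would set $w := u_{f_1} - u_{f_2}$. Since $f_1 = f_2$ on $\Omega^c$, $w$ vanishes there. The measurement equality $\mathcal{M}_{F_1,\alpha}f = \mathcal{M}_{F_2,\alpha}f$ translates to $\tilde{P}((-\Delta_{\tilde{g}})^{\tilde{s}})w = 0$ in $\Omega$. Because $w \in H^{s_M}(\mathbb{R}^n) \cap C_c^\infty(\mathbb{R}^n)$, the UCP of Theorem \ref{UCPFormal} (in the concrete form of Theorems \ref{UCPThm2}--\ref{UCPThm3}) applies and forces $w \equiv 0$ on $\mathbb{R}^n$. I may then write $u := u_{f_1} \equiv u_{f_2}$. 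In the second scenario, where $f \in C_c^\infty(W)$ and $u$ additionally satisfies a second-order elliptic equation on $\Omega^c$, I would instead invoke the stronger variant of the UCP that underpins Theorem \ref{MainThm2q}.

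Second, substituting the common $u$ into both semilinear equations and subtracting produces the pointwise identity
\[
F_1(x, u(x)) - F_2(x, u(x)) = 0 \quad \text{in } \Omega,
\]
or equivalently, via the analytic expansion,
\[
\sum_{\ell=0}^{L} \bigl(F_1^{(\ell)}(x) - F_2^{(\ell)}(x)\bigr) \frac{u(x)^\ell}{\ell!} = 0 \quad \text{in } \Omega.
\]
On the subset $E \subset \Omega$ where $u(x) \neq 0$, this identity along the graph of $u$ is precisely the asserted equality $F_1 = F_2$ in $E$, recovering the semilinear analogue of the linear conclusion $q_1 = q_2$.

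Third, to obtain the stronger conclusion $F_1^{(0)} \equiv F_2^{(0)}$ on all of $\Omega$, I would rearrange the above display as
\[
F_1^{(0)}(x) - F_2^{(0)}(x) = -\sum_{\ell=1}^{L} \bigl(F_1^{(\ell)}(x) - F_2^{(\ell)}(x)\bigr) \frac{u(x)^\ell}{\ell!},
\]
and exploit that $u \in C_c^\infty(\mathbb{R}^n)$ with $u|_{\Omega^c} = f$. In the compact-support setting of the second assertion, $u$ and all its derivatives vanish on $\partial \Omega$, so the right-hand side vanishes in a boundary neighborhood, forcing $F_1^{(0)} = F_2^{(0)}$ there; the identity is then propagated across $\Omega$ using the analyticity of $F_j(x, \cdot)$ together with the smoothness of $u$. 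This third step is the main obstacle, because a single measurement produces only one scalar identity in the $L+1$ analytic coefficients at each point; closing it requires squeezing out extra information from the compact-support/boundary-decay structure of $u$ rather than from a richer family of data. In the first (general $f \in H^{s_M}(\Omega^c)$) scenario, where boundary decay of $u$ is unavailable, the same conclusion would need an auxiliary observation, e.g. comparing with the trivial data $f = 0$ through the coercivity/admissibility hypotheses.
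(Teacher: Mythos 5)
Your proposal diverges from the paper's argument in an essential way, and the divergence opens a genuine gap. The paper proves Theorem~\ref{MainThm1Semilin} by the higher-order linearization method: it takes a one-parameter family of exterior data $f(\varepsilon)=\sum_{\ell=0}^{L}\varepsilon^{\ell}f^{\ell}$, differentiates the solution $u(\cdot;\varepsilon)$ repeatedly in $\varepsilon$ at $\varepsilon=0$, and obtains for each order $\ell$ a \emph{linear} Schr\"odinger-type problem \eqref{mainSemilin1Ord}, \eqref{mainSemilin2Ord}, etc.\ whose potential is exactly $F^{(\ell)}$. It then applies the already-established linear result, Theorem~\ref{MainThm1q}, to each of these linearized problems inductively, obtaining $F_1^{(\ell)}=F_2^{(\ell)}$ in $E$ for $\ell=1,\dots,L$, and finally a UCP application to the difference of the two original equations gives $F_1^{(0)}=F_2^{(0)}$ in $\Omega$. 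As the paper remarks explicitly, this requires infinitely many measurements.

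Your plan tries to work with a single measurement. The first step (UCP on $w=u_{f_1}-u_{f_2}$ to conclude $u_{f_1}\equiv u_{f_2}$) is fine and matches the paper's underlying use of Theorems~\ref{UCPThm2} and~\ref{UCPThm3}. But the second and third steps do not close. After collapsing to a single $u$, subtraction yields only the one scalar pointwise identity
\[
\sum_{\ell=0}^{L}\bigl(F_1^{(\ell)}(x)-F_2^{(\ell)}(x)\bigr)\frac{u(x)^{\ell}}{\ell!}=0\quad\text{in }\Omega,
\]
which is an equality of $F_1$ and $F_2$ \emph{along the graph of $u$} at each $x$, not an equality of the functions $F_1(x,\cdot)$ and $F_2(x,\cdot)$. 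The theorem's assertion ``$F_1=F_2$ in $E$'' means the full equality of all Taylor coefficients $F_1^{(\ell)}=F_2^{(\ell)}$ on $E$, and one scalar relation in $L+1$ unknowns at each point cannot deliver that. You flag this yourself, but the proposed repair --- exploiting boundary decay of $u$ and then ``propagating across $\Omega$ using the analyticity of $F_j(x,\cdot)$'' --- does not work: the analyticity is in the $u$-variable, not in $x$, and gives no mechanism to transfer a boundary-neighborhood identity of $F^{(0)}$ into the interior. The missing idea is precisely the $\varepsilon$-linearization, which gives you not one identity but a hierarchy of identities (one per $\ell$), each of which is a linear inverse problem that Theorem~\ref{MainThm1q} already solves. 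Without some mechanism of this kind that enlarges the data to a family, the proof cannot reach the stated conclusion.
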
 

\begin{remark}
    Note that $F^{(0)}$ can be viewed as the constant term in \eqref{mainSemilin}.
\end{remark}

Similar results can also be said for the coefficients $\alpha_i$.

\begin{theorem}[Uniqueness result for coefficients $\alpha_i$]\label{MainThm1Coef}
    Let $\Omega \subset \mathbb{R}^n$ be a bounded Lipschitz domain, for $n\in \mathbb{N}$, and $q\in L^\infty(\Omega)$ for $j=1,2$. Assume that $\alpha_i^1=\alpha_i^2$ for every $i=1,\dots,M$ except $i=m$. Suppose that a given nonzero $f=f_1=f_2\in  H^{s_M}(\Omega^c)$, the non-trivial unique solutions $u_{f_1},u_{f_2}\in H^{s_M}(\mathbb{R}^n)\cap C_c^\infty(\mathbb{R}^n)$ is such that $(-\Delta_{g_m})^{s_m}u_{f_1}(x)\neq0$ in some subset $E'\subset\Omega$. Then 
    \begin{equation}\label{MainThm1CoefEq}
		\mathcal{M}_{q,\alpha^1}f = \mathcal{M}_{q,\alpha^2}f
    \end{equation}
    implies $\alpha_m^1=\alpha_m^2$ in $E'$.

    Suppose instead that for a given nonzero $f=f_1=f_2\in C^\infty_c (W)$, $W\subset \Omega^c$ is a nonempty open set, the non-trivial unique solutions $u_{f_1},u_{f_2}\in H^{s_M}(\mathbb{R}^n)\cap C_c^\infty(\mathbb{R}^n)$ is such that $(-\Delta_{g_m})^{s_m}u_{f_1}(x)\neq0$ in some subset $E'\subset\Omega$, and satisfies $\mathcal{L}u(x)=g(x)$ in $\Omega^c$ for some (local) second order elliptic operator $\mathcal{L}$ and $g(x)\in C^\infty_c(\overline{\Omega^c})$. Then, the relation 
    \begin{equation}
		\mathcal{M}_{q,\alpha^1}f = \mathcal{M}_{q,\alpha^2}f
    \end{equation}
    still implies $\alpha_m^1=\alpha_m^2$ in $E'$.
\end{theorem}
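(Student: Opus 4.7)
The plan is to reduce Theorem \ref{MainThm1Coef} to the unique continuation principle stated as Theorems \ref{UCPThm2}--\ref{UCPThm3}, in exactly the same spirit as Theorems \ref{MainThm1q} and \ref{MainThm2q}, but exploiting the assumption that $q_1 = q_2 = q$ and that the two coefficient vectors $\alpha^1, \alpha^2$ agree in every slot except $i = m$. Let $u_1 := u_{f_1}$ and $u_2 := u_{f_2}$ be the two solutions to \eqref{mainJ}, and set $w := u_1 - u_2$. Since $f_1 = f_2$, we have $w \equiv 0$ in $\Omega^c$; in the second scenario, since each $u_j$ satisfies $\mathcal{L} u_j = g$ in $\Omega^c$ with the same $\mathcal{L}$ and $g$, linearity gives $\mathcal{L} w = 0$ in $\Omega^c$.

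Next, I would translate the DtN identity \eqref{MainThm1CoefEq} into an interior equation for $w$. Testing $\mathcal{M}_{q,\alpha^1} f - \mathcal{M}_{q,\alpha^2} f = 0$ against an arbitrary test function in $C_c^\infty(\Omega)$ and using the weak formulation of $\tilde{P}$ yields
\[
\tilde{P}((-\Delta_{\tilde{g}})^{\tilde{s}}) w = 0 \quad \text{in } \Omega.
\]
Combining this with the exterior vanishing of $w$, I apply Theorem \ref{UCPThm2} in the first scenario (where $w=0$ throughout $\Omega^c$ suffices) and Theorem \ref{UCPThm3} in the second scenario (where the extra elliptic structure on $w|_{\Omega^c}$ is precisely what the concrete UCP requires when $f$ is only supported in $W \subset \Omega^c$). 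Either way, we conclude $w \equiv 0$ in $\mathbb{R}^n$, hence $u_1 = u_2 =: u$.

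Writing the two copies of \eqref{mainJ} for this common $u$ and subtracting, the $q u$ terms cancel because $q_1 = q_2$, leaving
\[
\bigl(P^1((-\Delta_g)^s) - P^2((-\Delta_g)^s)\bigr) u = 0 \quad \text{in } \Omega.
\]
Since $\alpha_i^1 = \alpha_i^2$ for every $i \neq m$, all but one term cancels and this collapses to
\[
(\alpha_m^1 - \alpha_m^2)\,(-\Delta_{g_m})^{s_m} u = 0 \quad \text{in } \Omega.
\]
Restricting to $E' \subset \Omega$, where by hypothesis $(-\Delta_{g_m})^{s_m} u_{f_1} \neq 0$, we may divide and conclude $\alpha_m^1 = \alpha_m^2$ on $E'$.

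The principal obstacle is step two: verifying that the UCP in Theorems \ref{UCPThm2}--\ref{UCPThm3} applies to $\tilde{P}$ acting on the difference $w$. One must check that $\tilde{P} \in \mathcal{A}$ admits a suitable factorization $B\tilde{P}$ landing in the admissible form required by the UCP, and that the regularity $u_1, u_2 \in H^{s_M}(\mathbb{R}^n) \cap C_c^\infty(\mathbb{R}^n)$ passes to $w$; both follow from linearity and the fixed structural assumptions on $\mathcal{A}$ and on $g = (g_1,\dots,g_M)$. A secondary, but routine, point is that in the second scenario the elliptic equation satisfied by $u_j$ in $\Omega^c$ is inherited by $w$ with vanishing right-hand side, so the auxiliary hypothesis of Theorem \ref{UCPThm3} is met automatically rather than requiring independent verification.
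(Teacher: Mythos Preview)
Your proposal is correct and follows essentially the same approach as the paper: use the DtN identity to obtain $\tilde{P}((-\Delta_{\tilde{g}})^{\tilde{s}})w=0$ in $\Omega$, combine with $w=0$ in $\Omega^c$ (resp.\ $\mathcal{L}w=0$ in $\Omega^c$ and $w=0$ in $W$) to invoke Theorem~\ref{UCPThm2} (resp.\ Theorem~\ref{UCPThm3}), deduce $u_1=u_2$, and then subtract the two PDEs so that only the $(\alpha_m^1-\alpha_m^2)(-\Delta_{g_m})^{s_m}u$ term survives. The paper's argument is organized slightly differently (it writes down the difference equation \eqref{mainJnoQDiff} before applying the UCP) but the substance is identical.
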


Observe that we show the results for the potential and coefficient using a single measurement, but we used an infinite number of measurements for the semilinear case. This is because we will be using the higher order linearisation method for the semilinear result, which requires an infinite number of measurements. 

Moreover, our proof relies on the following UCPs.

\begin{theorem}[UCP1]\label{UCPThm2}
    Let $u\in H^{s_M}(\mathbb{R}^n)\cap C_c^\infty(\mathbb{R}^n)$. If $u$ satisfies
    \begin{equation}\label{UCPCond2}
		\begin{cases}
            \tilde{P}( (-\Delta_{\tilde{g}})^{\tilde{s}})u = 0 & \text{ in }\Omega,\\
            u=0 & \text{ in } \Omega^c,
		\end{cases}
    \end{equation}
    then $u\equiv 0$ in $\mathbb{R}^n$.
\end{theorem}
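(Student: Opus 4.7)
The plan is to exploit the admissibility condition on $\tilde{P}$ in a substantive way: this condition delivers a left multiplier $B$ such that the composition $B\tilde{P}$ splits as a finite sum of integer-order (hence local) differential operators plus a \emph{single} genuine fractional Laplacian $c_r(-\Delta_{\breve{g}_0})^r$ with $r\notin\mathbb{N}$. Once this reduction is in hand, the hypothesis $u\equiv 0$ in $\Omega^c$ kills the local terms on $\Omega^c$, and one can aim to conclude by invoking the known strong UCP for a single higher-order anisotropic fractional Laplacian.

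Concretely, I would first apply the operator identity from the admissibility definition to $u\in C_c^\infty(\mathbb{R}^n)$, which lies in the domain of every operator in play, to obtain the pointwise identity
\[
\sum_{j=1}^{J} c_j(x)(-\Delta_{\breve{g}_j})^{n_j} u(x) + c_r(x)(-\Delta_{\breve{g}_0})^r u(x) = B\bigl(\tilde{P}u\bigr)(x)\quad\text{on }\mathbb{R}^n.
\]
Restricting to $x\in\Omega^c$ and using that $u\equiv 0$ in an open neighborhood of every such $x$, together with the locality of each $(-\Delta_{\breve{g}_j})^{n_j}$ for $n_j\in\mathbb{N}$, every local term on the left vanishes, giving
\[
c_r(x)(-\Delta_{\breve{g}_0})^r u(x) = B\bigl(\tilde{P}u\bigr)(x)\quad\text{in }\Omega^c.
\]

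Next I would aim to show that $B(\tilde{P}u)\equiv 0$ on some nonempty open subset $V\subset\Omega^c$. Since $\tilde{P}u\equiv 0$ in $\Omega$ by hypothesis and $u$ is a compactly supported smooth function, $\tilde{P}u$ is smooth on $\mathbb{R}^n$ and supported in $\mathbb{R}^n\setminus\Omega$; one can try to combine this with a test-function / duality argument against $\varphi\in C_c^\infty(\Omega)$ together with the self-adjointness and algebraic structure of $B\tilde{P}$, or with a Caffarelli–Silvestre-type extension of $B(\tilde{P}u)$, to conclude vanishing of $B(\tilde{P}u)$ on some open $V\subset\Omega^c$. Granting this, because $c_r\not\equiv 0$ by the admissibility condition, one deduces $(-\Delta_{\breve{g}_0})^r u\equiv 0$ on $V$. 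Finally, with both $u\equiv 0$ and $(-\Delta_{\breve{g}_0})^r u\equiv 0$ on the nonempty open set $V$, I would invoke the strong UCP for a single higher-order anisotropic fractional Laplacian with $r\notin\mathbb{N}$, available from the Caffarelli–Silvestre–Stinga–Torrea extension together with the UCP techniques of \cite{CMR2021HigherOrderFracLapUCP,CMRU2022HigherOrderFracPolyUniqueness,GarciaRuland2019UCPHigherOrderFracLap}, to conclude $u\equiv 0$ on $\mathbb{R}^n$.

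The main obstacle is the intermediate step: proving that $B(\tilde{P}u)$ vanishes on a nonempty open subset of $\Omega^c$ in spite of the nonlocality of $B$. The identity $\tilde{P}u=0$ constrains $\tilde{P}u$ only inside $\Omega$, whereas the fractional terms making up $B$ feel values of $\tilde{P}u$ everywhere; so there is no naive locality argument. The crux of the proof must be to use the compact-support and smoothness hypothesis $u\in C_c^\infty(\mathbb{R}^n)$, together with the precise algebraic structure of the admissibility factorization $B\tilde{P}$, to force this vanishing — most plausibly by a duality/Runge-type argument exploiting that $\tilde{P}u$ is itself supported in $\overline{\Omega^c}$ and decays at infinity at a rate controlled by the fractional orders present in $\tilde{P}$.
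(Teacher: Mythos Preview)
Your proposal has a genuine gap at exactly the point you flag as the ``main obstacle'': showing that $B(\tilde{P}u)$ vanishes on some nonempty open $V\subset\Omega^c$. The hypothesis gives $\tilde{P}u=0$ only in $\Omega$, so $\tilde{P}u$ may be nonzero throughout $\Omega^c$; since $B$ contains genuinely fractional terms, $B(\tilde{P}u)\big|_{\Omega^c}$ depends on $\tilde{P}u\big|_{\Omega^c}$ and there is no reason it should vanish anywhere. The vague suggestions you offer (duality, a Runge-type argument, decay from compact support) do not supply the missing mechanism: duality against $\varphi\in C_c^\infty(\Omega)$ only yields information on $\Omega$, not on $\Omega^c$; Runge approximation produces \emph{many} solutions rather than constraining a \emph{given} one; and the decay of $u$ at infinity says nothing about the behaviour of $B(\tilde{P}u)$ on bounded open subsets of $\Omega^c$. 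In short, the admissibility factorization by itself is not enough to pass from $\tilde{P}u=0$ in $\Omega$ to $(-\Delta_{\breve g_0})^r u=0$ on an open set where $u$ also vanishes.

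The paper's proof bypasses this entirely and uses a different, much softer idea that you are overlooking: the admissibility class $\mathcal{A}$ requires $\tilde{P}$ to be \emph{coercive and bounded} on $H^{\tilde s_{M'}}(\mathbb{R}^n)$. Hence the exterior Dirichlet problem $\tilde{P}u=0$ in $\Omega$, $u=0$ in $\Omega^c$ is well-posed (Theorem~\ref{ExistThm}), and the a~priori estimate \eqref{SolnEst} with $F=f=0$ forces $u\equiv 0$ immediately. No factorization, no single-operator UCP, and no analysis on $\Omega^c$ is needed for this theorem; the factorization argument you attempt is the content of Lemma~\ref{UCPThm1}, which assumes the stronger hypothesis $\tilde{P}u=0$ in \emph{all} of $\mathbb{R}^n$ (making your step~3 trivial there). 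Your route would prove Theorem~\ref{UCPThm2} only if you could first upgrade $\tilde{P}u=0$ from $\Omega$ to $\mathbb{R}^n$, and the only tool available for that upgrade is precisely the coercivity/uniqueness you are not using.
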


Note that this can be extended by density to $u\in H^{s_M}(\mathbb{R}^n)$ where \eqref{UCPCond1} is well-defined. Refer to Theorem 2.4 of \cite{HitchhikerGuide} for the density result. On the other hand, we are only considering $u$ such that $u\in H^{s_M}(\mathbb{R}^n)\cap C_c^\infty(\mathbb{R}^n)$.

The condition \eqref{UCPCond2} can be interpreted as follows: We can assume that the value function is $0$ away from the domain $\Omega$, which is usually the medium in which the particle/species lives. Alternatively, it is also possible to input a function $f$ which is defined everywhere in the exterior $\Omega^c$ (for instance, $f$ may be nonlocal but concentrated near the boundary $\partial\Omega$).

On the other hand, if we assume that the value function is the solution of a second order linear elliptic problem, such as in most diffusion processes in non-viscous fluids, we have the following corollary:
	
\begin{theorem}[UCP2]\label{UCPThm3}
    Let $W$ be a nonempty open subset in $\Omega^c$. Let $\mathcal{L}$ be any given second order elliptic operator. For any $u\in H^{s_M}(\mathbb{R}^n)\cap C_c^\infty(\mathbb{R}^n)$ satisfying
    \begin{equation}\label{UCPCond3}
		\begin{cases}
            \tilde{P}( (-\Delta_{\tilde{g}})^{\tilde{s}})u = 0 & \text{ in }\Omega,\\
            \mathcal{L}u=0 &\text{ in }\Omega^c,\\
            u=0 & \text{ in } W,
		\end{cases}
    \end{equation}
    we have that $u\equiv 0$ in $\mathbb{R}^n$.
\end{theorem}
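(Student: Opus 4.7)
The plan is to reduce Theorem \ref{UCPThm3} to the already-stated Theorem \ref{UCPThm2} by upgrading the hypothesis $u=0$ in $W$ to the stronger statement $u=0$ in all of $\Omega^c$, at which point UCP1 applies verbatim. The bridge between these two statements is the classical (local) strong unique continuation principle for second-order elliptic operators applied in $\Omega^c$, where by assumption $\mathcal{L}u=0$.

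More concretely, I would proceed in two steps. First, since $u\in C_c^\infty(\mathbb{R}^n)$, there exists $R>0$ such that $u\equiv 0$ on $\mathbb{R}^n\setminus B_R$; this gives a nonempty open subset of the (unique) unbounded component of $\Omega^c$ on which $u$ vanishes. Applying the classical strong UCP for second-order elliptic operators (for instance, Aronszajn's theorem under sufficient regularity of the coefficients of $\mathcal{L}$, or a Carleman-estimate based argument for rougher coefficients) to $\mathcal{L}u=0$ in $\Omega^c$ yields $u\equiv 0$ on the entire unbounded component of $\Omega^c$. Second, the hypothesis $u=0$ on the nonempty open set $W\subset\Omega^c$, together with the same classical UCP, yields $u\equiv 0$ on the connected component of $\Omega^c$ containing $W$. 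Since $\Omega$ is a bounded Lipschitz domain, $\Omega^c$ has a single unbounded component (with possibly some bounded interior components if $\Omega$ is not simply connected), and provided that every connected component of $\Omega^c$ is reached either by $\{|x|>R\}$ or by $W$---for instance, when $\Omega^c$ is connected---we obtain $u\equiv 0$ on all of $\Omega^c$.

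With $u\equiv 0$ on $\Omega^c$ and $\tilde{P}((-\Delta_{\tilde{g}})^{\tilde{s}})u=0$ in $\Omega$ in hand, the hypotheses of Theorem \ref{UCPThm2} are satisfied, so $u\equiv 0$ in $\mathbb{R}^n$, concluding the proof.

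The main technical obstacle is the regularity assumption needed on the coefficients of $\mathcal{L}$ in order for the classical strong UCP to apply: Aronszajn's theorem requires Lipschitz principal coefficients, while lower regularity forces the use of more delicate Carleman estimates. A secondary, essentially geometric, obstacle is the possible disconnectedness of $\Omega^c$; if $\Omega$ has interior holes not touched by $W$ or by $\{|x|>R\}$, a further geometric or topological assumption on $\Omega$ (or on the position of $W$) is needed to propagate the vanishing into every bounded component of $\Omega^c$. In the generic setting where $\Omega^c$ is connected, both obstacles disappear and the reduction to Theorem \ref{UCPThm2} is immediate.
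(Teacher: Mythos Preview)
Your proposal is correct and follows essentially the same route as the paper: first show $u\equiv 0$ in $\Omega^c$ using classical second-order elliptic theory, then invoke Theorem~\ref{UCPThm2}. The only difference is that the paper appeals to ``maximum principles'' for this first step while you invoke the strong unique continuation property for $\mathcal{L}$; your choice is arguably the more appropriate justification here, and your discussion of the connectivity of $\Omega^c$ and the regularity needed on the coefficients of $\mathcal{L}$ makes explicit caveats that the paper leaves implicit.
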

 
\section{Unique Continuation Properties}\label{sec:UCP}
	
In this section, we prove more general unique continuation properties (UCPs) for fractional polyharmonic equations. 

We first begin by stating the UCP for a single higher order anisotropic fractional Laplacian. We give the result of \cite[Proposition 1.9]{GarciaRuland2019UCPHigherOrderFracLap}, in our specific case. 
\begin{theorem}\label{UCPSingleThm}
    Suppose $\sigma\in\mathbb{R}_+\backslash\mathbb{N}$ and $u\in Dom((-\Delta_\gamma)^\sigma)$ for $\gamma$ satisfying the assumptions of Section \ref{subsec:OperatorDef}. If
    \[(-\Delta_\gamma)^\sigma u=u|_\omega=0\]
    for some nonempty open set $\omega\subset\mathbb{R}^n$, then $u\equiv0$ in $\mathbb{R}^n$.
\end{theorem}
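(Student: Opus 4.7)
The plan is to combine a Caffarelli--Silvestre type extension with unique continuation for the resulting degenerate elliptic problem. Writing $\sigma = k + \tau$ with $k = \lfloor\sigma\rfloor \in \mathbb{N}$ and $\tau \in (0,1)$, the first step is to construct an extension $U \colon \mathbb{R}^{n+1}_+ \to \mathbb{R}$ solving a $(k+1)$-fold composition of the degenerate divergence operator $\nabla \cdot (y^{1-2\tau}\nabla \,\cdot\,)$, where $-\Delta_\gamma$ in the $x$-variables replaces the flat Laplacian, such that the boundary trace recovers $u(x)$ and the iterated weighted co-normal traces at $y=0$ recover $(-\Delta_\gamma)^j u$ for $j = 0,1,\dots,k$, with the final trace reproducing $(-\Delta_\gamma)^\sigma u$ up to a normalising constant. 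This is the higher-order analogue of the Caffarelli--Silvestre extension, developed in the isotropic flat case by Yang and by Cora--Musina, and adapted to the anisotropic setting here via the spectral representation of $-\Delta_\gamma$ summarised in Section \ref{subsec:OperatorDef}.

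The next step is to translate the hypotheses into vanishing Cauchy data for $U$ on $\omega \times \{0\}$. The condition $u|_\omega = 0$ immediately gives that the zeroth trace vanishes. The intermediate traces $(-\Delta_\gamma)^j u|_\omega$ for $j = 1,\dots, k$ also vanish because $(-\Delta_\gamma)^j$ is a \emph{local} differential operator of integer order and $u$ vanishes together with its derivatives on the open set $\omega$ (after invoking the interior smoothing of $u \in \mathrm{Dom}((-\Delta_\gamma)^\sigma)$ away from the singular set). Finally, the top-order weighted normal trace vanishes by the assumption $(-\Delta_\gamma)^\sigma u = 0$. Thus $U$ admits a full tower of vanishing Cauchy data on the $n$-dimensional piece of the boundary $\omega \times \{0\}$.

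With this data at hand, I would apply a Carleman estimate tailored to the degenerate weight $y^{1-2\tau}$ and to the iterated structure of the extension, in the spirit of the Ruland-type argument used for the single fractional Laplacian and extended to higher order in \cite{GarciaRuland2019UCPHigherOrderFracLap}, to conclude that $U$ vanishes in a neighbourhood of $\omega \times \{0\}$ in $\mathbb{R}^{n+1}_+$. Standard unique continuation for the iterated degenerate elliptic operator then propagates the vanishing to all of $\mathbb{R}^{n+1}_+$, and taking the trace at $y=0$ forces $u \equiv 0$ in $\mathbb{R}^n$.

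The main obstacle is deriving the Carleman estimate in the anisotropic higher-order degenerate setting, since the non-constant metric $\gamma$ enters the conjugated operator in a delicate way and one must track commutators arising from both the iteration and the weight. Fortunately, the normalisation $\gamma^{ij}(0) = \delta^{ij}$ and the smallness of the homogeneous H\"older seminorms of $\gamma$ imposed in Section \ref{subsec:OperatorDef} are designed precisely so that the variable-coefficient problem is a controlled perturbation of the flat model; these are exactly the hypotheses under which Proposition 1.9 of \cite{GarciaRuland2019UCPHigherOrderFracLap} is stated. Consequently, rather than redoing the Carleman machinery, the cleanest proof is simply to cite that result, verifying that our $\gamma$ meets their assumptions.
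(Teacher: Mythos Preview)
Your proposal is correct and, in fact, lands exactly where the paper does: the paper does not supply an independent proof of this theorem but simply quotes it as \cite[Proposition~1.9]{GarciaRuland2019UCPHigherOrderFracLap}, noting that the assumptions on $\gamma$ in Section~\ref{subsec:OperatorDef} are tailored to match the hypotheses there. Your sketch of the higher-order Caffarelli--Silvestre extension followed by a Carleman estimate for the iterated degenerate operator is precisely the strategy of that reference, and your closing remark---that the cleanest route is to verify the hypotheses and cite---is exactly what the paper does.
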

A strong UCP result was first stated for the isotropic case as Corollary 5.5 in \cite{Yang2013HigherOrderFracLap} and proved in \cite{FelliFerrero2020HigherOrderFracLapUCP} for $1<s<2$, where the vanishing of $u$ of infinite order at a point $x_0\in\omega\subseteq\mathbb{R}^n$ implies $u\equiv0$ in $\omega$ for a connected open domain $\omega$. This was later extended in \cite[Theorem 1.2]{CMR2021HigherOrderFracLapUCP} to fractional orders $s\in(-n/4,\infty)\backslash\mathbb{Z}$, and in \cite{GarciaRuland2019UCPHigherOrderFracLap} for fractional Schr\"odinger equations with Hardy type gradient potentials for $s\in\mathbb{R}_+\backslash\mathbb{N}$. For other similar works, see \cite{SeoHigherOrderUCP1,SeoHigherOrderUCP2,SeoHigherOrderUCP3} in the case where $s$ depends on the dimension $n$, \cite{CMRU2022HigherOrderFracPolyUniqueness} for the UCP result with lower order ($<s$) perturbation, and \cite{KarRailoZimmermann2023HigherOrderFracPLapUCP} for the UCP result for fractional $p$-biharmonic operator.

With Theorem \ref{UCPSingleThm} in hand, we proceed to prove our main UCP results, Theorems \ref{UCPThm2} and \ref{UCPThm3}.

We first begin with a lemma.

\begin{lemma}\label{UCPThm1}
    \sloppy Let $W$ be a nonempty open subset in $\Omega^c$. If $u\in H^{s_M}(\mathbb{R}^n)\cap C_c^\infty(\mathbb{R}^n)$ satisfies
    \begin{equation}\label{UCPCond1}
		\begin{cases}
            \tilde{P}( (-\Delta_{\tilde{g}})^{\tilde{s}})u = 0 & \text{ in }\mathbb{R}^n,\\
            u=0 & \text{ in } W,
		\end{cases}
    \end{equation}
    then $u\equiv 0$ in $\mathbb{R}^n$.
\end{lemma}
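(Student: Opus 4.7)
The plan is to reduce the multi-term nonlocal equation $\tilde{P}((-\Delta_{\tilde{g}})^{\tilde{s}})u = 0$ to a single-operator equation of non-integer fractional order, so that the classical single-operator UCP (Theorem \ref{UCPSingleThm}) can be invoked. The key leverage is precisely the admissibility of $\tilde{P} \in \mathcal{A}$, which was designed exactly so that such a reduction is available.

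First I would invoke the admissibility definition: there exists a posynomial $B((-\Delta_{\tilde{g}'})^{\tilde{a}'})$ with $L^\infty(\mathbb{R}^n)$ coefficients such that
\[
B((-\Delta_{\tilde{g}'})^{\tilde{a}'}) \, \tilde{P}((-\Delta_{\tilde{g}})^{\tilde{s}}) = \sum_{j=1}^J c_j(x)(-\Delta_{\breve{g}_j})^{n_j} + c_r(x)(-\Delta_{\breve{g}_0})^r,
\]
where $n_j \in \mathbb{N}$ for every $j$, $r \notin \mathbb{N}$, and the coefficients $c_j, c_r$ are nonvanishing. Applying $B$ to the identity $\tilde{P}u = 0$ (valid in all of $\mathbb{R}^n$ by hypothesis) and using the commutativity of the spectrally-defined fractional powers stressed in Section \ref{subsec:OperatorDef} together with the smoothness $u \in C_c^\infty(\mathbb{R}^n)$, I obtain
\[
\sum_{j=1}^J c_j(x)(-\Delta_{\breve{g}_j})^{n_j} u(x) + c_r(x)(-\Delta_{\breve{g}_0})^r u(x) = 0 \qquad \text{in } \mathbb{R}^n.
\]

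Next I would localise this identity to $W$. Since each exponent $n_j$ is a nonnegative integer, the operator $(-\Delta_{\breve{g}_j})^{n_j}$ is a genuine linear differential operator of order $2n_j$, hence strictly local; because $u$ vanishes on the open set $W$, all of its classical derivatives do too, and therefore $(-\Delta_{\breve{g}_j})^{n_j} u = 0$ pointwise in $W$ for every $j = 1,\dots,J$. Substituting back and using $c_r \neq 0$, I conclude that $(-\Delta_{\breve{g}_0})^r u = 0$ in $W$. Since $r \notin \mathbb{N}$, Theorem \ref{UCPSingleThm} applied to the single operator $(-\Delta_{\breve{g}_0})^r$, together with the hypothesis $u = 0$ in $W$, then immediately yields $u \equiv 0$ in $\mathbb{R}^n$.

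The main technical obstacle I anticipate is the precise interpretation of $B\tilde{P}$ as an operator when both $B$ and $\tilde{P}$ carry variable coefficients: genuine commutators between multiplication by the coefficient functions and the nonlocal powers $(-\Delta_{\breve{g}})^s$ generally produce additional terms not present in the formal posynomial product. For the two explicit families after the definition of $\mathcal{A}$ this difficulty is absent (the coefficients are constants in Example \hyperlink{Eg1}{1}, and $B \equiv 1$ in Example \hyperlink{Eg2}{2}), so the identity $(B\tilde{P})u = B(\tilde{P}u)$ holds by direct computation. A fully general proof would likely demand that the coefficients act as Sobolev multipliers as in \cite{CMRU2022HigherOrderFracPolyUniqueness}, but the standing hypothesis $u \in C_c^\infty(\mathbb{R}^n)$ guarantees that every composition above is pointwise finite and smooth, which is all the locality/UCP argument actually uses.
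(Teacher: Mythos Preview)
Your proposal is correct and follows essentially the same route as the paper: use admissibility of $\tilde{P}\in\mathcal{A}$ to factor $B\tilde{P}$ into integer-order local terms plus a single non-integer fractional term, kill the local terms on $W$ by the vanishing of $u$ there, and then invoke Theorem~\ref{UCPSingleThm} on the remaining $(-\Delta_{\breve{g}_0})^r$. Your additional remarks on the commutator issue for variable coefficients are more cautious than the paper itself, which simply writes the product identity without further comment.
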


\begin{proof}
    Since $\tilde{P}\in\mathcal{A}$, there exists $B((-\Delta_{g'})^b)$ such that 
    \[B((-\Delta_{g'})^b)\tilde{P}( (-\Delta_{\tilde{g}})^{\tilde{s}}) u(x)= \sum_{j=1}^{J} c_j (-\Delta_{\breve{g}_j})^{n_j} u(x) + c_r (-\Delta_{\breve{g}_0})^r u(x),\quad J<pq,x\in\mathbb{R}^n,\]
    where $n_j\in\mathbb{N}$, $r\not\in\mathbb{N}$, $c_j,c_r\neq0$ for all $j=1,\dots,J$. 
    By assumption, $\tilde{P}( (-\Delta_{\tilde{g}})^{\tilde{s}})=0$ in $\mathbb{R}^n$, so 
    \begin{equation}\label{UCPThm1PfEq1}B((-\Delta_{g'})^b)\tilde{P}( (-\Delta_{\tilde{g}})^{\tilde{s}}) u(x)= \sum_{j=1}^{J} c_j (-\Delta_{\breve{g}_j})^{n_j} u(x) + c_r (-\Delta_{\breve{g}_0})^r u(x)=0\quad\text{ for all }x\in\mathbb{R}^n.\end{equation} 
    In particular, this holds for $x\in W$, where we have assumed $u(x) = 0$, which gives \[(-\Delta_{\breve{g}_j})^{n_j} u(x) = 0\quad\text{ in }W\text{ for every }n_j\in\mathbb{N}.\] Consequently, from \eqref{UCPThm1PfEq1}, we obtain \[(-\Delta_{\breve{g}_0})^r u(x) = 0\quad\text{ in }W.\]
    Applying the unique continuation principle for $(-\Delta_{\breve{g}_0})^r$ in $W$ given in Theorem \ref{UCPSingleThm}, we can obtain 
    \[u\equiv0\quad\text{ in }\mathbb{R}^n.\]
\end{proof}

\begin{remark}
    Consider the case of constant coefficients as in \hyperlink{Eg1}{\emph{Example 1}}. Assume additionally that the operators $(-\Delta_{\tilde{g}_i})^{\tilde{s}_i}$ are isotropic, in the sense that $\tilde{g}_i=\tilde{\gamma}_i\mathbb{I}$ for some positive constants $\tilde{\gamma}_i$ and $\mathbb{I}$ is the identity matrix, such that $(-\Delta_{\tilde{g}_i})^{\tilde{s}_i} = \underline{\gamma}_i(-\Delta)^{\tilde{s}_i}$ for corresponding constants $\underline{\gamma}_i$ and the fractional Laplacian $(-\Delta)^{\tilde{s}_i}$. In this case, we can conduct a Fourier transform on $(-\Delta)^{\tilde{s}_i}$ given by \eqref{IsotropicFracLapDef}. Then, this result can also be interpreted as follows:
    Suppose $u$ is smooth enough (say $L^1(\mathbb{R}^n)$) such that the Fourier transform of $\tilde{P}( (-\Delta_{\tilde{g}})^{\tilde{s}})u$ is well-defined. Then, formally,  
    \[\widehat{\tilde{P}( (-\Delta_{\tilde{g}})^{\tilde{s}})u}=\widehat{\tilde{P}}(|\xi|)\hat{u}(\xi),\] where $\widehat{\tilde{P}}(|\xi|)$ is of the form 
    \[\widehat{\tilde{P}}(|\xi|)=\sum_{i=1}^{M'}\tilde{\alpha}_i\tilde{\gamma}_i|\xi|^{2\tilde{s}_i}\] with $\tilde{\alpha}_i$ positive. Then, $\widehat{\tilde{P}}(|\xi|)=0$ on a set of measure $0$, so for any open set, $\hat{u}(\xi)=0$. Therefore, $\hat{u}(\xi)=0$ a.e. in $\mathbb{R}^n$, and its inverse Fourier transform $u(x)\equiv0$ in $\mathbb{R}^n$.

    Note that this does not hold in the general case for anisotropic fractional Laplacians $(-\Delta_{\tilde{g}})^{\tilde{s}}$, or even in the isotropic case when the coefficients $\tilde{\alpha}_i(x)$ are non-constant functions with respect to $x$, such as in \hyperlink{Eg2}{\emph{Example 2}}. In the first case, the Fourier transform of the anisotropic fractional Laplacians $(-\Delta_{\tilde{g}})^{\tilde{s}}$ does not have a well-defined form. In the second case, the non-constant coefficients $\tilde{\alpha}_i(x)$ will become a convolution with a power of $|\xi|$, and there may be open sets on which $\widehat{\tilde{P}}(|\xi|)=0$.
\end{remark}

From this, we are able to prove Theorem \ref{UCPThm2}.

\begin{proof}[Proof of Theorem \ref{UCPThm2}]
    Since $u\equiv0$ in the exterior of $\Omega$, we can view the condition \eqref{UCPCond2} as a Dirichlet problem for $u$ in $\Omega$ with Dirichlet condition $0$. Applying the existence Theorem \ref{ExistThm} with $F=f\equiv0$, we have that $u\equiv0$ by \eqref{SolnEst}. This means that $u$ satisfies 
    \begin{equation}
		\begin{cases}
            \tilde{P}( (-\Delta_{\tilde{g}})^{\tilde{s}})u = 0 & \text{ in }\mathbb{R}^n,\\
            u=0 & \text{ in } \Omega^c,
		\end{cases}
    \end{equation}
    which in particular, means that $u$ satisfies 
    \begin{equation}
		\begin{cases}
            \tilde{P}( (-\Delta_{\tilde{g}})^{\tilde{s}})u = 0 & \text{ in }\mathbb{R}^n,\\
            u=0 & \text{ in } W
		\end{cases}
    \end{equation}
    for any nonempty open subset $W\subset\Omega^c$, i.e. $u$ satisfies the condition \eqref{UCPCond1} of Lemma \ref{UCPThm1}. Therefore, we can apply the result of Lemma \ref{UCPThm1} to obtain that $u\equiv0$ in $\mathbb{R}^n$.    
\end{proof}

This result implies our final unique continuation principle.

\begin{proof}[Proof of Theorem \ref{UCPThm3}]
    Since $u$ satisfies 
    \begin{equation}
		\begin{cases}
            \mathcal{L}u=0 &\text{ in }\Omega^c,\\
            u=0 & \text{ in } W\subset\Omega^c,
		\end{cases}
    \end{equation}
    by maximum principles in the classical theory of second order elliptic operators, we have that $u=0$ in $\Omega^c$. By the previous Theorem \ref{UCPThm2}, we have the result $u\equiv0$ in $\mathbb{R}^n$.
\end{proof}

\section{Inverse Problems}\label{sec:IP}

\subsection{Recovery of Potential}
\begin{proof}[Proof of Theorem \ref{MainThm1q}]
    We first assume that $\alpha_i^1=\alpha_i^2$, i.e. $P( (-\Delta_g)^s)_1=P( (-\Delta_g)^s)_2=:P( (-\Delta_g)^s)$ in \eqref{mainJ}. Suppose $u_j$, $j=1,2$, satisfies \eqref{mainJ}, i.e. 
    \begin{equation}\label{mainJPfQ}
    \begin{cases}
        P( (-\Delta_g)^s) u_j + q_ju_j  =0 &\text{ in }\Omega, \\
        u_j=f_j &\text{ in }\Omega^c.
    \end{cases}
    \end{equation} 
    
    Since $ \mathcal{M}_{q_1,\alpha}f =\mathcal{M}_{q_2,\alpha}f $, for a given $\tilde{P}( (-\Delta_{\tilde{g}})^{\tilde{s}})$, 
    \[\tilde{P}( (-\Delta_{\tilde{g}})^{\tilde{s}})u_1 = \mathcal{M}_{q_1,\alpha}f = \mathcal{M}_{q_1,\alpha}f = \tilde{P}( (-\Delta_{\tilde{g}})^{\tilde{s}})u_2 \quad \text{ in }\Omega.\] 
    At the same time, 
    \begin{equation}\label{mainJPfQEq0}\left. u_1\right|_{\Omega^c} = f_1 = f_2 = \left. u_2\right|_{\Omega^c}\quad \text{ in }\Omega^c.\end{equation}
    Since $\tilde{P}$ is linear (any fractional power of the Laplacian is linear by the linearity of the Fourier transform), writing $\tilde{u}=u_1-u_2\in H^{s_M}(\mathbb{R}^n)$, we have that
    \begin{equation}
		\begin{cases}
            \tilde{P}( (-\Delta_{\tilde{g}})^{\tilde{s}})\tilde{u} = 0 & \text{ in }\Omega,\\
            \tilde{u}=0 & \text{ in } \Omega^c,
		\end{cases}
    \end{equation}
    i.e. $\tilde{u}$ satisfies \eqref{UCPCond2}. By the UCP Theorem \ref{UCPThm2}, we obtain that $\tilde{u}\equiv0$ in $\mathbb{R}^n$, i.e. $u_1=u_2$ in $\mathbb{R}^n$.

    In particular, $\tilde{u}=u_1-u_2=0$ in $\Omega$. Therefore, taking the difference between the two equations of \eqref{mainJPfQ}, we have 
    \begin{equation}\label{mainJPfQEq1}
    0 = P( (-\Delta_g)^s) \tilde{u} + (q_1-q_2)u_1 + q_2 \tilde{u} = (q_1-q_2)u_1 \quad \text{ in }\Omega.
    \end{equation} 
    But for a nonzero input $f_1$ in $\Omega^c$, the Lax-Milgram theorem guarantees a non-trivial solution $u_{f_1}$. Assume that $u_1(x)\neq0$ for all $x\in E\subset\Omega$. Restricting to $E$, we have that 
    \begin{equation}\label{mainJPfQEq2}(q_1(x)-q_2(x))u_1(x)=0\quad\text{ in }E,
    \end{equation}
    so $q_1(x)=q_2(x)$ for $x\in E$. 
\end{proof}

Observe that \eqref{mainJPfQEq1}--\eqref{mainJPfQEq2} are pointwise in $x\in E$. In fact, we are only using a single measurement here, unlike the infinite measurements usually required for previous results as in \cite{CMR2021HigherOrderFracLapUCP}, \cite{CMRU2022HigherOrderFracPolyUniqueness} or \cite{KarRailoZimmermann2023HigherOrderFracPLapUCP}.

\begin{proof}[Proof of Theorem \ref{MainThm2q}]
    The proof follows along the same lines as that of Theorem \ref{MainThm1q}. However, instead of \eqref{mainJPfQEq0}, we have that \begin{equation}\left. u_1\right|_W = f_1 = f_2 = \left. u_2\right|_W\quad \text{ in }W.\end{equation}
    Since $u_j$ is assumed to satisfy $\mathcal{L}u_j=g$ in $\Omega^c$ for some second order elliptic operator $\mathcal{L}$, consequently, $\tilde{u}=u_1-u_2\in H^{s_M}(\mathbb{R}^n)$ satisfies
    \begin{equation}
		\begin{cases}
            \tilde{P}( (-\Delta_{\tilde{g}})^{\tilde{s}})\tilde{u} = 0 & \text{ in }\Omega,\\
            \mathcal{L}\tilde{u}=0 &\text{ in }\Omega^c,\\
            \tilde{u}=0 & \text{ in } Q,
		\end{cases}
    \end{equation}
    i.e. $\tilde{u}$ satisfies \eqref{UCPCond3}. By the UCP Theorem \ref{UCPThm3}, we obtain that $\tilde{u}\equiv0$ in $\mathbb{R}^n$, i.e. $u_1=u_2$ in $\mathbb{R}^n$. Therefore, the remaining continues as in the previous proof, and we obtain $q_1(x)=q_2(x)$ for $x\in E$. 
\end{proof}

\subsection{Semilinear Case}

As in \cite{LinLiuLiuZhang2021-InversePbSemilinearParabolic-CGOSolnsSuccessiveLinearisation}, we make use of a higher order linearisation scheme, which we briefly sketch here. 
Consider the system \eqref{mainSemilin}. Let 
\[f(\varepsilon)=\sum_{\ell=0}^L\varepsilon^\ell f^\ell.\] 
By Theorem \ref{ExistThm}, there exists a unique solution $u(x;\varepsilon)$ of \eqref{mainSemilin}. Let $u(x;0)$ be the solution of \eqref{mainSemilin} when $\varepsilon=0$.

Define \[u^{(1)}:=\partial_\varepsilon u|_{\varepsilon=0}=\lim\limits_{\varepsilon\to 0}\frac{u(x,t;\varepsilon)-u(x,t;0) }{\varepsilon},\] and consider the corresponding problem for  $u^{(1)}$. Since $F$ is analytic, we have 
\begin{equation}\label{mainSemilin1Ord}
    \begin{cases}
        P( (-\Delta_g)^s)u^{(1)}(x) + F^{(1)}(x)u^{(1)}(x)=0 &\text{ in }\Omega, \\
        u^{(1)}=f^1 &\text{ in }\Omega^c.
    \end{cases}
\end{equation}

Next, we consider 
\[u^{(2)}:=\partial_\varepsilon^2 u|_{\varepsilon=0},\] which gives the second order linearisation:
\begin{equation}\label{mainSemilin2Ord}
    \begin{cases}
        P( (-\Delta_g)^s)u^{(2)}(x) + F^{(1)}(x)u^{(2)}(x) + F^{(2)}(x)[u^{(1)}(x)]^2=0 &\text{ in }\Omega, \\
        u^{(2)}=f^2 &\text{ in }\Omega^c.
    \end{cases}
\end{equation}

Inductively, for $\ell\in\mathbb{N}$, we consider 
\[u^{(\ell)}=\partial_\varepsilon^\ell u|_{\varepsilon=0},\]
we can obtain a sequence of equations, which shall be employed again in determining the higher order Taylor coefficients of the unknown $F$. 

\begin{remark}
    Note that in order to apply this higher order linearisation technique, we need the infinite differentiability of the equation \eqref{mainSemilin} with respect to the given boundary data $f$, which can be easily shown by applying the implicit function theorem of Banach spaces, as in \cite{LinLiuLiuZhang2021-InversePbSemilinearParabolic-CGOSolnsSuccessiveLinearisation}. We omit the proof here. 
\end{remark}

With this, we can proceed to prove Theorem \ref{MainThm1Semilin}.  
\begin{proof}[Proof of Theorem \ref{MainThm1Semilin}]
    Comparing \eqref{mainSemilin1Ord} with \eqref{main}, we apply the results of Theorem \ref{MainThm1q} to \eqref{mainSemilin1Ord} to obtain that $F^{(1)}_1(x)=F^{(1)}_2(x)$ for all $x\in E$ for some small enough $E\subset\Omega$, so we have the uniqueness result for the first order Taylor coefficient of $F$.

    Furthermore, $u^{(1)}_1(x)=u^{(1)}_2(x)$ in $\mathbb{R}^n$. Thus, \eqref{mainSemilin2Ord} reduces to 
    \begin{equation}
    \begin{cases}
        P( (-\Delta_g)^s)u^{(2)}(x) + F^{(2)}(x)u^{(2)}(x)=0 &\text{ in }\Omega, \\
        u^{(2)}=f^2 &\text{ in }\Omega^c:=\mathbb{R}^n\setminus \overline{\Omega}.
    \end{cases}
    \end{equation}
    Once again, we can compare this with \eqref{main} and apply Theorem \ref{MainThm1q} to obtain that $F^{(2)}_1(x)=F^{(2)}_2(x)$ for all $x\in E$, since $u^{(2)}(x)\neq0$ for all $x\in E\subset\Omega$.

    Reiterating this argument inductively for $\ell=1,\dots,L$, we have the uniqueness result for all the $\ell$-th order Taylor coefficients of $F$. Substituting this result in \eqref{mainSemilin} for $j=1,2$, we have that 
    \begin{equation*}
    \begin{cases}
        P( (-\Delta_g)^s)u_j + \sum_{\ell=1}^L F^{(\ell)}(x)\frac{u_j^\ell}{\ell!} + F^{(0)}_j =0 &\text{ in }\Omega, \\
        u_j=f_j &\text{ in }\Omega^c.
    \end{cases}
    \end{equation*}
    Taking the difference between the two equations for $j=1,2$, we have that, for $\mathcal{M}_{F_1,\alpha} = \mathcal{M}_{F_2,\alpha}$,
    \begin{equation*}
    \begin{cases}
        P( (-\Delta_g)^s)\tilde{u} + \sum_{\ell=1}^L F^{(\ell)}(x)\frac{\tilde{u}^\ell}{\ell!} + F^{(0)}_1 - F^{(0)}_2 =0 &\text{ in }\Omega, \\
        \tilde{u}=0 &\text{ in }\Omega^c,
    \end{cases}
    \end{equation*}
    where $\tilde{u}$ represents $u_1-u_2$ here. But by the UCP Theorem \ref{UCPThm2}, $\tilde{u}\equiv0$ in $\mathbb{R}^n$. Therefore, 
    \[F^{(0)}_1 - F^{(0)}_2 =0 \text{ in }\Omega.\]     
    By the definition of $F$, this means that $F_1=F_2$ for $x\in E$.

    Finally, the proof of the second part follows by a similar modification as in that of Theorem \ref{MainThm2q}. Indeed, for each order of linearisation $\ell=1,\dots,L$ (inductively), we have 
    \begin{equation}
    \begin{cases}
        P( (-\Delta_g)^s)(u^{(\ell)}_1-u^{(\ell)}_2) + (F^{(\ell)}_1-F^{(\ell)}_2)u^{(\ell)}_1+F^{(1)}_2(u^{(\ell)}_1-u^{(\ell)}_2)=0 &\text{ in }\Omega, \\
        \mathcal{L}(u^{(\ell)}_1-u^{(\ell)}_2)=0 &\text{ in }\Omega^c,\\
        u^{(\ell)}_1-u^{(\ell)}_2=0 &\text{ in }\Omega^c.
    \end{cases}
    \end{equation}
    Therefore, as in Theorem \ref{MainThm2q}, we can apply the UCP Theorem \ref{UCPThm3} to obtain the $F^{(\ell)}_1=F^{(\ell)}_2$ in $E$ for every $\ell=1,\dots,L$. The case $\ell=0$ follows similarly as in the first part.
\end{proof}

In this case, in order to conduct the linearisation for every $\varepsilon$, we require an infinite number of measurements to recover $F$. We are unable to use the result of \cite{LinLiu2022FracLapInverseProbMinMeasurements} to obtain the same result using a minimal number of measurements, because we have set up our UCP results differently.

\subsection{Recovery of Non-Isotropy}

Next, we proceed to recover the non-isotropy of the poly-fractional equation, given by the coefficients $\alpha_i$ of \eqref{main}.

\begin{proof}[Proof of Theorem \ref{MainThm1Coef}]

We first assume that for a nonzero input $f_j$ in $\Omega^c$, the non-trivial unique solution $u_j$ is such that for some $E'\subset\Omega$, $(-\Delta_{g_m})^{s_m} u_1(x)\neq0$ for all $x\in E'$.

Let $u_j$ be the solution of \eqref{mainJ} for $j=1,2$, that is, $u_j$ satisfies 
\begin{equation}\label{mainJnoQ}
    \begin{cases}
        P( (-\Delta_g)^s)_j u_j + qu_j := \sum_{i=1}^M \alpha_i^j(-\Delta_{g_i})^{s_i}u_j + qu_j =0 &\text{ in }\Omega, \\
        u_j=f_j &\text{ in }\Omega^c
    \end{cases}
\end{equation} 
Writing $\tilde{u}=u_1-u_2$, $\tilde{u}$ solves
\begin{equation}\label{mainJnoQDiff}
    \begin{cases}
        P( (-\Delta_g)^s)_1 \tilde{u} + q\tilde{u} = \left(\alpha_m^2-\alpha_m^1\right)(-\Delta_{g_m})^{s_m}u_2 &\text{ in }\Omega, \\
        \tilde{u}=0 &\text{ in }\Omega^c
    \end{cases}
\end{equation} 

Next, as in the proof of Theorem \ref{MainThm1q}, the condition \eqref{MainThm1CoefEq} 
and the UCP Theorem \ref{UCPThm2} implies $\tilde{u}=0$ in $\mathbb{R}^n$, so $P( (-\Delta_g)^s)_j\tilde{u}=q\tilde{u}=0$ in $\Omega$. Therefore,
\[\left(\alpha_m^2-\alpha_m^1\right)(-\Delta_{g_m})^{s_m}u_2 = 0 \quad \text{ in }\Omega.\] 
In particular, \[\left(\alpha_m^2(x)-\alpha_m^1(x)\right)(-\Delta_{g_m})^{s_m}u_2(x) = 0 \quad \forall x\in E'.\] 

But by assumption, $E'$ is taken to be the set in $\Omega$ such that $(-\Delta_{g_m})^{s_m} u_1(x)\neq0$. Therefore, 
\[\alpha_m^2(x)=\alpha_m^1(x) \quad \forall x\in E'.\]

Finally, the proof for the second case follows by a similar modification as in that of Theorem \ref{MainThm2q}. Here, instead of \eqref{mainJnoQDiff}, we have 
    \begin{equation}
    \begin{cases}
        P( (-\Delta_g)^s)_1 \tilde{u} + q\tilde{u} = \left(\alpha_m^2-\alpha_m^1\right)(-\Delta_{g_m})^{s_m}u_2 &\text{ in }\Omega, \\
        \mathcal{L}\tilde{u}=0 &\text{ in }\Omega^c,\\
        \tilde{u}=0 &\text{ in }\Omega^c.
    \end{cases}
    \end{equation}
    Apply the UCP Theorem \ref{UCPThm3} to obtain the $\alpha_m^2(x)=\alpha_m^1(x)$ in $E'$.
\end{proof}

Once again, as in the results for recovering the potential in Theorems \ref{MainThm1q}--\ref{MainThm2q}, we are only using a single measurement here.

\section{Final Remarks and Open Problems}\label{sec:final}
We remark that it may also be possible to apply our UCP result to recover the variable coefficient $\gamma(x)$ of the anisotropic fractional Laplacian $(-\Delta_\gamma)^\sigma$ or $(-\Delta)^\sigma_\gamma:=-\nabla^s\cdot \gamma\nabla^s$. A possibility is using the fractional Liouville reduction for the latter case. Similar results have been obtained in \cite{Covi2022_AnistropicFracConductivity-arXiv} in the case of $0<s<1$ for a single anisotropic fractional Laplacian $(-\Delta)^\sigma_\gamma$.

On another note, the spectral decomposition of the higher order fractional Laplacian $(-\Delta_\gamma)^\sigma$ may possibly be useful in the recovery of the exponent of the poly-fractional Laplacian. Indeed, previous works on recovering the exponent have made use of the eigenfunction expansion of the solution of the forward problem (see, for instance, \cite{Guerngar2020UniquenessFracExponent,Guerngar2021SimultaneousFracExponent}). 

Another interesting open problem is to consider the variable exponent fractional operator instead. This may include the form considered in \cite{ZengBaiRadulescu2023FracPLapVariableExp-DetermineCoefs-noUCP} and \cite{Ok2023CV_HolderNonLocalVariable}. A result is known in \cite{KianSoccorsiYamamoto2018TimeFracVariableExponent} and \cite{InversePbTimeFracLapVariableExponent} for the time-fractional case, but there has not yet been any result for the space-fractional Laplacian. Indeed, suppose the variable exponent $\sigma(x)$ of the fractional Laplacian is such that $\sigma(x)=1$ in $\Omega^c$. Then, the UCP condition \eqref{UCPCond1} reduces to that of \eqref{UCPCond3}.

Therefore, we leave these two problems as interesting open problems for readers for future research.

		\medskip 
	
	\noindent\textbf{Acknowledgment.} 

    The authors would like to thank Yi-Hsuan Lin and Philipp Zimmermann for helpful discussion. 
    C.-L. Lin is partially supported by the Ministry of Science and Technology of Taiwan.     
	H. Liu is supported by the Hong Kong RGC General Research Funds (projects 12302919, 12301420 and 11300821), the NSFC/RGC Joint Research Fund (project N CityU 101/21), and the France-Hong Kong ANR/RGC Joint Research Grant (A-CityU 203/19).

\bibliographystyle{plain}
\bibliography{ref}

\begin{thebibliography}{10}

\bibitem{Alessandrini1988}
Giovanni Alessandrini.
\newblock Stable determination of conductivity by boundary measurements.
\newblock {\em Appl. Anal.}, 27(1-3):153--172, 1988.

\bibitem{BarlowBassChenKassmann2009MixedLocalNonlocalParabolic}
Martin~T. Barlow, Richard~F. Bass, Zhen-Qing Chen, and Moritz Kassmann.
\newblock Non-local {D}irichlet forms and symmetric jump processes.
\newblock {\em Trans. Amer. Math. Soc.}, 361(4):1963--1999, 2009.

\bibitem{BassKassmann2005VariableExponent}
Richard~F. Bass and Moritz Kassmann.
\newblock Harnack inequalities for non-local operators of variable order.
\newblock {\em Trans. Amer. Math. Soc.}, 357(2):837--850, 2005.

\bibitem{BassKassmann2005VariableExponentHolder}
Richard~F. Bass and Moritz Kassmann.
\newblock H\"{o}lder continuity of harmonic functions with respect to operators
  of variable order.
\newblock {\em Comm. Partial Differential Equations}, 30(7-9):1249--1259, 2005.

\bibitem{bhattacharyya2021inverse}
Sombuddha Bhattacharyya, Tuhin Ghosh, and Gunther Uhlmann.
\newblock Inverse problems for the fractional {L}aplacian with lower order
  non-local perturbations.
\newblock {\em Transactions of the American Mathematical Society},
  374(5):3053--3075, 2021.

\bibitem{BiagiDipierroValdinociVecchi2022CommPDEMixedLocalNonlocal}
Stefano Biagi, Serena Dipierro, Enrico Valdinoci, and Eugenio Vecchi.
\newblock Mixed local and nonlocal elliptic operators: regularity and maximum
  principles.
\newblock {\em Comm. Partial Differential Equations}, 47(3):585--629, 2022.

\bibitem{BiagiMeglioliPunzo2023-MixedLocalNonlocalUniqueness}
Stefano Biagi, Giulia Meglioli, and Fabio Punzo.
\newblock Uniqueness for local-nonlocal elliptic equations.
\newblock {\em arXiv: 2307.02209}, 2023.

\bibitem{Biroud2023MixedLocalNonlocal}
Kheireddine Biroud.
\newblock Mixed local and nonlocal equation with singular nonlinearity having
  variable exponent.
\newblock {\em J. Pseudo-Differ. Oper. Appl.}, 14(1):Paper No. 13, 24, 2023.

\bibitem{Blazevski2013MixedLocalNonlocalGeomagnetism}
Daniel Blazevski and Diego del Castillo-Negrete.
\newblock Local and nonlocal anisotropic transport in reversed shear magnetic
  fields: Shearless cantori and nondiffusive transport.
\newblock {\em Phys. Rev. E}, 87:063106, Jun 2013.

\bibitem{byun2023regularity}
Sun-Sig Byun, Ho-Sik Lee, and Kyeong Song.
\newblock Regularity results for mixed local and nonlocal double phase
  functionals.
\newblock {\em arXiv: 2301.06234}, 2023.

\bibitem{CabreSerra2016ExtensionProblem}
Xavier Cabr\'{e} and Joaquim Serra.
\newblock An extension problem for sums of fractional {L}aplacians and 1-{D}
  symmetry of phase transitions.
\newblock {\em Nonlinear Anal.}, 137:246--265, 2016.

\bibitem{CaffarelliSilvestre2007CommPDE-Extension}
Luis Caffarelli and Luis Silvestre.
\newblock An extension problem related to the fractional {L}aplacian.
\newblock {\em Comm. Partial Differential Equations}, 32(7-9):1245--1260, 2007.

\bibitem{CMB3}
Gianluca Calcagni, Sachiko Kuroyanagi, and Shinji Tsujikawa.
\newblock Cosmic microwave background and inflation in multi-fractional
  spacetimes.
\newblock {\em J. Cosmol. Astropart. Phys.}, (8):039, front matter + 32, 2016.

\bibitem{Calderon1980}
Alberto-P. Calder\'{o}n.
\newblock On an inverse boundary value problem.
\newblock In {\em Seminar on {N}umerical {A}nalysis and its {A}pplications to
  {C}ontinuum {P}hysics ({R}io de {J}aneiro, 1980)}, pages 65--73. Soc. Brasil.
  Mat., Rio de Janeiro, 1980.

\bibitem{CaoLiu2019FracHelmholtz}
Xinlin Cao and Hongyu Liu.
\newblock Determining a fractional {H}elmholtz equation with unknown source and
  scattering potential.
\newblock {\em Commun. Math. Sci.}, 17(7):1861--1876, 2019.

\bibitem{ChangGonzalez2011AdvMath-HigherOrderFracLap}
Sun-Yung~Alice Chang and Mar\'{\i}a del~Mar Gonz\'{a}lez.
\newblock Fractional {L}aplacian in conformal geometry.
\newblock {\em Adv. Math.}, 226(2):1410--1432, 2011.

\bibitem{ChenHolm2004FracLossyMedia}
W.~Chen and S.~Holm.
\newblock {Fractional {L}aplacian time-space models for linear and nonlinear
  lossy media exhibiting arbitrary frequency power-law dependency}.
\newblock {\em The Journal of the Acoustical Society of America},
  115(4):1424--1430, 03 2004.

\bibitem{ChenKimSongVondracek2012MixedLocalNonlocalHarnack}
Zhen-Qing Chen, Panki Kim, Renming Song, and Zoran Vondra\v{c}ek.
\newblock Boundary {H}arnack principle for {$\Delta+\Delta^{\alpha/2}$}.
\newblock {\em Trans. Amer. Math. Soc.}, 364(8):4169--4205, 2012.

\bibitem{ConstantinNguyen2018PhysDGlobalSQGBoundedDomains}
Peter Constantin and Huy~Quang Nguyen.
\newblock Local and global strong solutions for {SQG} in bounded domains.
\newblock {\em Phys. D}, 376/377:195--203, 2018.

\bibitem{CoraMusina2022JFA-HigherOrderFracLap}
Gabriele Cora and Roberta Musina.
\newblock The {$s$}-polyharmonic extension problem and higher-order fractional
  {L}aplacians.
\newblock {\em J. Funct. Anal.}, 283(5):Paper No. 109555, 33, 2022.

\bibitem{Covi2020IP}
Giovanni Covi.
\newblock An inverse problem for the fractional {S}chr\"{o}dinger equation in a
  magnetic field.
\newblock {\em Inverse Problems}, 36(4):045004, 24, 2020.

\bibitem{Covi2022_AnistropicFracConductivity-arXiv}
Giovanni Covi.
\newblock Uniqueness for the anisotropic fractional conductivity equation.
\newblock {\em arXiv: 2212.11331}, 2022.

\bibitem{CGRU2023reduction}
Giovanni Covi, Tuhin Ghosh, Angkana R\"uland, and Gunther Uhlmann.
\newblock A reduction of the fractional {C}alder\'on problem to the local
  {C}alder\'on problem by means of the {C}affarelli-{S}ilvestre extension.
\newblock {\em arXiv: 2305.04227}, 2023.

\bibitem{CMR2021HigherOrderFracLapUCP}
Giovanni Covi, Keijo M\"{o}nkk\"{o}nen, and Jesse Railo.
\newblock Unique continuation property and {P}oincar\'{e} inequality for higher
  order fractional {L}aplacians with applications in inverse problems.
\newblock {\em Inverse Probl. Imaging}, 15(4):641--681, 2021.

\bibitem{CMRU2022HigherOrderFracPolyUniqueness}
Giovanni Covi, Keijo M\"{o}nkk\"{o}nen, Jesse Railo, and Gunther Uhlmann.
\newblock The higher order fractional {C}alder\'{o}n problem for linear local
  operators: uniqueness.
\newblock {\em Adv. Math.}, 399:Paper No. 108246, 29, 2022.

\bibitem{deFilippisMingione2022MixedLocalNonlocalPLap}
Cristiana De~Filippis and Giuseppe Mingione.
\newblock Gradient regularity in mixed local and nonlocal problems.
\newblock {\em Mathematische Annalen}, 12 2022.

\bibitem{HitchhikerGuide}
Eleonora Di~Nezza, Giampiero Palatucci, and Enrico Valdinoci.
\newblock Hitchhiker's guide to the fractional {S}obolev spaces.
\newblock {\em Bull. Sci. Math.}, 136(5):521--573, 2012.

\bibitem{dipierro2021nonlocal}
Serena Dipierro, Edoardo~Proietti Lippi, and Enrico Valdinoci.
\newblock (non)local logistic equations with neumann conditions.
\newblock {\em arXiv: 2101.02315}, 2021.

\bibitem{DV21MixedLocalNonlocalBiology}
Serena Dipierro and Enrico Valdinoci.
\newblock Description of an ecological niche for a mixed local/nonlocal
  dispersal: an evolution equation and a new {N}eumann condition arising from
  the superposition of {B}rownian and {L}\'{e}vy processes.
\newblock {\em Phys. A}, 575:Paper No. 126052, 20, 2021.

\bibitem{DongLiu2023CVPDEMixedSpaceTimeFrac}
Hongjie Dong and Yanze Liu.
\newblock Sobolev estimates for fractional parabolic equations with space-time
  non-local operators.
\newblock {\em Calc. Var. Partial Differential Equations}, 62(3):Paper No. 96,
  49, 2023.

\bibitem{CMB1}
H.~Ertik, D.~Demirhan, H.~Şirin, and F.~Büyükkılıç.
\newblock A fractional mathematical approach to the distribution functions of
  quantum gases: {C}osmic {M}icrowave {B}ackground {R}adiation problem is
  revisited.
\newblock {\em Physica A: Statistical Mechanics and its Applications},
  388(21):4573--4585, 2009.

\bibitem{farquhar2018computational}
Megan~E. Farquhar, Timothy~J. Moroney, Qianqian Yang, Ian~W. Turner, and Kevin
  Burrage.
\newblock Computational modelling of cardiac ischaemia using a variable-order
  fractional {L}aplacian.
\newblock {\em arXiv: 1809.07936}, 2018.

\bibitem{FelliFerrero2020HigherOrderFracLapUCP}
Veronica Felli and Alberto Ferrero.
\newblock Unique continuation principles for a higher order fractional
  {L}aplace equation.
\newblock {\em Nonlinearity}, 33(8):4133--4190, 2020.

\bibitem{garain2021regularity}
Prashanta Garain and Juha Kinnunen.
\newblock On the regularity theory for mixed local and nonlocal quasilinear
  parabolic equations.
\newblock {\em arXiv: 2108.02986}, 2021.

\bibitem{GarainKinnunen2023JDEMixedLocalNonlocalHarnack}
Prashanta Garain and Juha Kinnunen.
\newblock Weak {H}arnack inequality for a mixed local and nonlocal parabolic
  equation.
\newblock {\em J. Differential Equations}, 360:373--406, 2023.

\bibitem{GarainLindgren2023CVPDEMixedLocalNonlocalHolder}
Prashanta Garain and Erik Lindgren.
\newblock Higher {H}\"{o}lder regularity for mixed local and nonlocal
  degenerate elliptic equations.
\newblock {\em Calc. Var. Partial Differential Equations}, 62(2):Paper No. 67,
  36, 2023.

\bibitem{GarciaRuland2019UCPHigherOrderFracLap}
Mar\'{\i}a~Angeles Garc\'{\i}a-Ferrero and Angkana R\"{u}land.
\newblock Strong unique continuation for the higher order fractional
  {L}aplacian.
\newblock {\em Math. Eng.}, 1(4):715--774, 2019.

\bibitem{GRSU20}
Tuhin Ghosh, Angkana R\"{u}land, Mikko Salo, and Gunther Uhlmann.
\newblock Uniqueness and reconstruction for the fractional {C}alder\'{o}n
  problem with a single measurement.
\newblock {\em J. Funct. Anal.}, 279(1):108505, 42, 2020.

\bibitem{GSU2020CalderonSchrodinger}
Tuhin Ghosh, Mikko Salo, and Gunther Uhlmann.
\newblock The {C}alder\'{o}n problem for the fractional {S}chr\"{o}dinger
  equation.
\newblock {\em Anal. PDE}, 13(2):455--475, 2020.

\bibitem{GonzalezSaez2018HigherOrderFracLap}
Mar\'{\i}a del~Mar Gonz\'{a}lez and Mariel S\'{a}ez.
\newblock Fractional {L}aplacians and extension problems: the higher rank case.
\newblock {\em Trans. Amer. Math. Soc.}, 370(11):8171--8213, 2018.

\bibitem{Guerngar2021SimultaneousFracExponent}
Ngartelbaye Guerngar, Erkan Nane, Ramazan Tinaztepe, Suleyman Ulusoy, and
  Hans~Werner Van~Wyk.
\newblock Simultaneous inversion for the fractional exponents in the space-time
  fractional diffusion equation {$\partial_t^\beta u=-(-\Delta)^{\alpha/2}u-
  (-\Delta)^{\gamma/2}u$}.
\newblock {\em Fract. Calc. Appl. Anal.}, 24(3):818--847, 2021.

\bibitem{Guerngar2020UniquenessFracExponent}
Ngartelbaye Guerngar, Erkan Nane, Süleyman Ulusoy, and Hans Werner~Van Wyk.
\newblock A uniqueness determination of the fractional exponents in a
  three-parameter fractional diffusion.
\newblock {\em arXiv: 1810.01543}, 2020.

\bibitem{Hoh2000VariableExponent}
Walter Hoh.
\newblock Pseudo differential operators with negative definite symbols of
  variable order.
\newblock {\em Rev. Mat. Iberoamericana}, 16(2):219--241, 2000.

\bibitem{KaltenbacherRundell2021FracWaveInverse}
Barbara Kaltenbacher and William Rundell.
\newblock Some inverse problems for wave equations with fractional derivative
  attenuation.
\newblock {\em Inverse Problems}, 37(4):Paper No. 045002, 28, 2021.

\bibitem{kang2022lqlptheory}
Jaehoon Kang and Daehan Park.
\newblock An $l_q(l_p)$-theory for time-fractional diffusion equations with
  nonlocal operators generated by {L}\'evy processes with low intensity of
  small jumps.
\newblock {\em arXiv: 2110.01800}, 2022.

\bibitem{KarRailoZimmermann2023HigherOrderFracPLapUCP}
Manas Kar, Jesse Railo, and Philipp Zimmermann.
\newblock The fractional {$p$}-biharmonic systems: optimal {P}oincar\'{e}
  constants, unique continuation and inverse problems.
\newblock {\em Calc. Var. Partial Differential Equations}, 62(4):Paper No. 130,
  36, 2023.

\bibitem{kenig2007calderon}
Carlos Kenig, Johannes Sj{\"o}strand, and Gunther Uhlmann.
\newblock The {C}alder{\'o}n problem with partial data.
\newblock {\em Annals of Mathematics}, 165(2):567--591, 2007.

\bibitem{KianSoccorsiYamamoto2018TimeFracVariableExponent}
Yavar Kian, Eric Soccorsi, and Masahiro Yamamoto.
\newblock On time-fractional diffusion equations with space-dependent variable
  order.
\newblock {\em Ann. Henri Poincar\'{e}}, 19(12):3855--3881, 2018.

\bibitem{KimParkRyu2021JDEMixedSpaceTimeFrac}
Kyeong-Hun Kim, Daehan Park, and Junhee Ryu.
\newblock An {$L_q(L_p)$}-theory for diffusion equations with space-time
  nonlocal operators.
\newblock {\em J. Differential Equations}, 287:376--427, 2021.

\bibitem{InversePbTimeFracLapVariableExponent}
Nataliia Kinash and Jaan Janno.
\newblock An inverse problem for a generalized fractional derivative with an
  application in reconstruction of time- and space-dependent sources in
  fractional diffusion and wave equations.
\newblock {\em Mathematics}, 7(12), 2019.

\bibitem{KV85}
R.~V. Kohn and M.~Vogelius.
\newblock Determining conductivity by boundary measurements. {II}. {I}nterior
  results.
\newblock {\em Comm. Pure Appl. Math.}, 38(5):643--667, 1985.

\bibitem{KohnVogelius}
Robert Kohn and Michael Vogelius.
\newblock Determining conductivity by boundary measurements.
\newblock {\em Comm. Pure Appl. Math.}, 37(3):289--298, 1984.

\bibitem{KowWangEigenUCPfrac}
Pu-Zhao Kow and Jenn-Nan Wang.
\newblock Strict monotonicity of eigenvalues and unique continuation for
  spectral fractional elliptic operators.
\newblock {\em Unpublished manuscript}.

\bibitem{lai2019global}
Ru-Yu Lai and Yi-Hsuan Lin.
\newblock Global uniqueness for the fractional semilinear {S}chr{\"o}dinger
  equation.
\newblock {\em Proc. Amer. Math. Soc.}, 147(3):1189--1199, 2019.

\bibitem{Li2021CommPDEFracMagneticPotential}
Li~Li.
\newblock Determining the magnetic potential in the fractional magnetic
  {C}alder\'{o}n problem.
\newblock {\em Comm. Partial Differential Equations}, 46(6):1017--1026, 2021.

\bibitem{LinNakamura2019MultiTermTimeFracUCP}
Ching-Lung Lin and Gen Nakamura.
\newblock Unique continuation property for multi-terms time fractional
  diffusion equations.
\newblock {\em Math. Ann.}, 373(3-4):929--952, 2019.

\bibitem{LinNakamura2021MultiTermTimeFracUCP}
Ching-Lung Lin and Gen Nakamura.
\newblock Classical unique continuation property for multi-terms time
  fractional diffusion equations.
\newblock {\em arXiv: 2105.04794}, 2021.

\bibitem{LinLiu2022FracLapInverseProbMinMeasurements}
Yi-Hsuan Lin and Hongyu Liu.
\newblock Inverse problems for fractional equations with a minimal number of
  measurements.
\newblock {\em Communications on Analysis and Computation}, 1, 03 2022.

\bibitem{LinLiuLiuZhang2021-InversePbSemilinearParabolic-CGOSolnsSuccessiveLinearisation}
Yi-Hsuan Lin, Hongyu Liu, Xu~Liu, and Shen Zhang.
\newblock Simultaneous recoveries for semilinear parabolic systems.
\newblock {\em Inverse Problems}, 38(11):Paper No. 115006, 39, 2022.

\bibitem{LiuUhlmann2015DetermineSpeedHyperbolic}
Hongyu Liu and Gunther Uhlmann.
\newblock Determining both sound speed and internal source in thermo- and
  photo-acoustic tomography.
\newblock {\em Inverse Problems}, 31(10):105005, 10, 2015.

\bibitem{McLean2000book}
William McLean.
\newblock {\em Strongly elliptic systems and boundary integral equations}.
\newblock Cambridge University Press, Cambridge, 2000.

\bibitem{MPV13MixedLocalNonlocalBiology}
Eugenio Montefusco, Benedetta Pellacci, and Gianmaria Verzini.
\newblock Fractional diffusion with {N}eumann boundary conditions: the logistic
  equation.
\newblock {\em Discrete Contin. Dyn. Syst. Ser. B}, 18(8):2175--2202, 2013.

\bibitem{Ok2023CV_HolderNonLocalVariable}
Jihoon Ok.
\newblock Local {H}\"{o}lder regularity for nonlocal equations with variable
  powers.
\newblock {\em Calc. Var. Partial Differential Equations}, 62(1):Paper No. 32,
  31, 2023.

\bibitem{PV18MixedLocalNonlocalBiology}
Benedetta Pellacci and Gianmaria Verzini.
\newblock Best dispersal strategies in spatially heterogeneous environments:
  optimization of the principal eigenvalue for indefinite fractional {N}eumann
  problems.
\newblock {\em J. Math. Biol.}, 76(6):1357--1386, 2018.

\bibitem{railo2022fractional}
Jesse Railo and Philipp Zimmermann.
\newblock Fractional {C}alder\'on problems and {P}oincar\'e inequalities on
  unbounded domains.
\newblock {\em arXiv: 2203.02425}, 2022.

\bibitem{StingaFracLapTorusHighOrder}
Luz Roncal and Pablo~Ra\'{u}l Stinga.
\newblock Fractional {L}aplacian on the torus.
\newblock {\em Commun. Contemp. Math.}, 18(3):1550033, 26, 2016.

\bibitem{rosoton2015nonexistence}
Xavier Ros-Oton and Joaquim Serra.
\newblock Nonexistence results for nonlocal equations with critical and
  supercritical nonlinearities.
\newblock {\em Comm. Partial Differential Equations}, 40(1):115--133, 2015.

\bibitem{RulandCommPDE2015UCPFracSchrodingerRoughPotential}
Angkana R\"{u}land.
\newblock Unique continuation for fractional {S}chr\"{o}dinger equations with
  rough potentials.
\newblock {\em Comm. Partial Differential Equations}, 40(1):77--114, 2015.

\bibitem{Ruland2021SingleMeasurementStability}
Angkana R\"{u}land.
\newblock On single measurement stability for the fractional {C}alder\'{o}n
  problem.
\newblock {\em SIAM J. Math. Anal.}, 53(5):5094--5113, 2021.

\bibitem{SeoHigherOrderUCP1}
Ihyeok Seo.
\newblock On unique continuation for {S}chr\"{o}dinger operators of fractional
  and higher orders.
\newblock {\em Math. Nachr.}, 287(5-6):699--703, 2014.

\bibitem{SeoHigherOrderUCP2}
Ihyeok Seo.
\newblock Carleman inequalities for fractional {L}aplacians and unique
  continuation.
\newblock {\em Taiwanese J. Math.}, 19(5):1533--1540, 2015.

\bibitem{SeoHigherOrderUCP3}
Ihyeok Seo.
\newblock Unique continuation for fractional {S}chr\"{o}dinger operators in
  three and higher dimensions.
\newblock {\em Proc. Amer. Math. Soc.}, 143(4):1661--1664, 2015.

\bibitem{StefanovUhlmann2009ThermoacousticTom}
Plamen Stefanov and Gunther Uhlmann.
\newblock Thermoacoustic tomography with variable sound speed.
\newblock {\em Inverse Problems}, 25(7):075011, 16, 2009.

\bibitem{stinga2010extension}
Pablo~Ra{\'u}l Stinga and Jos{\'e}~Luis Torrea.
\newblock Extension problem and {H}arnack's inequality for some fractional
  operators.
\newblock {\em Communications in Partial Differential Equations},
  35(11):2092--2122, 2010.

\bibitem{SVWZ2022MixedLocalNonlocal}
Xifeng Su, Enrico Valdinoci, Yuanhong Wei, and Jiwen Zhang.
\newblock Regularity results for solutions of mixed local and nonlocal elliptic
  equations.
\newblock {\em Math. Z.}, 302(3):1855--1878, 2022.

\bibitem{sylvester1987global}
John Sylvester and Gunther Uhlmann.
\newblock A global uniqueness theorem for an inverse boundary value problem.
\newblock {\em Annals of Mathematics}, 125(1):153--169, 1987.

\bibitem{TTU2016SimultaneousFracExpTimeSpace}
Salih Tatar, Ramazan T\i~naztepe, and S\"{u}leyman Ulusoy.
\newblock Simultaneous inversion for the exponents of the fractional time and
  space derivatives in the space-time fractional diffusion equation.
\newblock {\em Appl. Anal.}, 95(1):1--23, 2016.

\bibitem{TU2013SimultaneousFracExpTimeSpace}
Salih Tatar and S\"{u}leyman Ulusoy.
\newblock A uniqueness result for an inverse problem in a space-time fractional
  diffusion equation.
\newblock {\em Electron. J. Differential Equations}, pages No. 258, 9, 2013.

\bibitem{CMB2}
José Tenreiro~Machado, P.~Stefanescu, Ovidiu Tintareanu, and Dumitru Baleanu.
\newblock Fractional calculus analysis of the cosmic microwave background.
\newblock {\em Romanian Reports in Physics}, 65:316--323, 01 2013.

\bibitem{WangWuLiChen2014DCDSCompressibleSQG}
Shu Wang, Zhonglin Wu, Linrui Li, and Shengtao Chen.
\newblock On one multidimensional compressible nonlocal model of the
  dissipative {QG} equations.
\newblock {\em Discrete Contin. Dyn. Syst. Ser. S}, 7(5):1111--1132, 2014.

\bibitem{Weiss2019fractionalgeophysical}
C~J Weiss, B~G van Bloemen~Waanders, and H~Antil.
\newblock {Fractional operators applied to geophysical electromagnetics}.
\newblock {\em Geophysical Journal International}, 220(2):1242--1259, 11 2019.

\bibitem{Yang2013HigherOrderFracLap}
Ray Yang.
\newblock On higher order extensions for the fractional {L}aplacian.
\newblock {\em arXiv: 1302.4413}, 2013.

\bibitem{ZengBaiRadulescu2023FracPLapVariableExp-DetermineCoefs-noUCP}
Shengda Zeng, Yunru Bai, and Vicen\c{t}iu~D. R\u{a}dulescu.
\newblock Inverse problems for anisotropic obstacle problems with multivalued
  convection and unbalanced growth.
\newblock {\em Evol. Equ. Control Theory}, 12(3):790--822, 2023.

\end{thebibliography}

\end{document}